\theoremstyle{plain}
\newtheorem{thm}{Theorem}[section]
\newtheorem{thmx}{Theorem}
\newtheorem{lem}[thm]{Lemma}
\newtheorem{cor}[thm]{Corollary}
\newtheorem{prop}[thm]{Proposition}
\newtheorem{propx}{Proposition}
\theoremstyle{definition}
\newtheorem{defi}[thm]{Definition}
\newtheorem{rem}[thm]{Remark}
\newtheorem{ex}[thm]{Example}
\newcommand{\R}{\mathbb{R}}
\newcommand{\C}{\mathbb{C}}
\newcommand{\Z}{\mathbb{Z}}
\newcommand{\HF}{\mathrm{HF}}
\newcommand{\CF}{\mathrm{CF}}
\newcommand{\Homol}{\mathrm{H}}
\newcommand{\Symp}{\mathrm{Symp}}
\title{Floer cohomology of Dehn twists \\
along real Lagrangian spheres}
\author{Patricia Dietzsch}
\address{Department of Mathematics\\
  ETH Zürich\\
  Rämistrasse 101, 8092 Zürich, Switzerland}
\email{patricia.dietzsch@math.ethz.ch}
\begin{document}
\maketitle
\begin{abstract}
  We study the Floer cohomology of the Dehn twist
  along a real Lagrangian sphere in a symplectic manifold
  endowed with an anti-symplectic involution.
  We prove that there exists a distinguished element in the Floer group
  that is a fixed point of the automorphism induced by the involution. Our methods of proof are based on Mak-Wu's cobordism and Floer-theoretic considerations.
\end{abstract}

\section{Introduction and Main Results}\label{sec:Intro}
Let $(M,\omega)$ be a closed symplectic manifold and $S\subset M$ a Lagrangian sphere with a parametrization $\iota\colon S^n \xrightarrow{\approx} S$.
Associated to $(S,\iota)$ there exists a distinguished symplectic isotopy class represented by the \textit{Dehn twist}. The Dehn twist $\tau_S$ is a symplectomorphism compactly supported in a neighbourhood of $S$.
Seidel proved that the square of the Dehn twist, in some cases, is not symplectically, but only smoothly isotopic to the identity \cite{seidelthesis97}, \cite{seidel_lectures}.
To prove this result Seidel established a Floer homology exact sequence
\begin{align}\label{I:eqLES}
    \dots \to (\HF^*(S,N) \otimes \HF^*(Q,S))^{k} \to \HF^{k}(Q,N) \to 
    \HF^k(Q,\tau_s(N)) \to \dots
\end{align}
for admissible Lagrangian submanifolds $Q$ and $N$ in $M$
  \cite{seidel03},\cite{seidelbook}.
There is a distinguished element $A\in \HF^*(\tau^{-1}_S)$ that characterizes the map $\HF^k(Q,N) \to HF^k(Q,\tau_S(N))$ that occurs in the sequence. 

Due to the relevance of the above exact sequence it is thus natural to investigate properties of the element $A$.
The goal of this paper is to study the element $A$ in the situation,
where there exists an anti-symplectic involution that preserves $S$.

We work in the following setting. $(M, \omega)$ is a closed
symplectically aspherical symplectic manifold. Unless otherwise explicitely stated, all involved Lagrangian submanifolds are assumed to be closed, oriented and relatively symplectically aspherical. 
Floer cohomology groups are $\mathbb{Z}_2$-graded with coefficients in 
the universal Novikov field over $\mathbb{Z}_2$. More details about these assumptions are given in section \ref{subsec:HFsetting}.

Let $c\colon M \to M$ be an anti-symplectic involution satisfying $c(S)=S$. Consider the smooth involution $\iota^* c:=\iota^{-1}c\iota \colon S^n \to S^n$. We assume that $\iota^*c$ is either smoothly isotopic to the identity or to the reflection
$r(x_1,x_2,\dots,x_{n+1})= (-x_1,x_2,\dots,x_{n+1}).$
This assumption is satisfied in the important geometric setting where $(M,c)$ is a real fiber of a real Lefschetz fibration with one critical point and $S$ is the corresponding vanishing sphere.

Under this assumption, our main result is

\begin{thmx}\label{I:thm:main}
    $c$ induces an automorphism 
    $c_* \colon \HF^*(\tau_S^{-1}) \to \HF^*(\tau_S^{-1})$
    and $c_*(A) = A$.
\end{thmx}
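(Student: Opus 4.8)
The plan is to produce the involution $c_*$ geometrically and then to identify $A$ with the morphism determined by Mak--Wu's cobordism and to show that this cobordism is preserved by the involution. For the first part, the key point is that the hypothesis on $\iota^*c$ forces $c\,\tau_S\,c^{-1}$ to be Hamiltonian isotopic to $\tau_S^{-1}$: after a $c$-equivariant Weinstein neighbourhood theorem, $c$ may be taken to equal, near $S$, the standard anti-symplectic involution lifting $\iota^*c$ (cotangent lift composed with fibrewise negation), and a direct computation with Seidel's model of the Dehn twist shows that conjugation by this involution sends the model twist to its inverse --- fibrewise negation reverses the normalised geodesic flow, while the base map, being smoothly isotopic to an isometry (the identity or the reflection $r$), commutes with it up to isotopy; since $\tau_S$ is the identity away from $S$ and depends on the parametrisation only through its smooth isotopy class, $c\tau_Sc^{-1}\simeq\tau_S^{-1}$ follows. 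Passing to $M\times M^-$, where $\HF^*(\tau_S^{-1})\cong\HF^*(\Delta,\Gamma_{\tau_S^{-1}})$, the composition $\rho:=(\text{factor swap})\circ(c\times c)$ is a \emph{symplectic} involution that preserves $\Delta$ and, by the above, $\Gamma_{\tau_S^{-1}}$; it therefore induces an involution $c_*$ of $\HF^*(\tau_S^{-1})$. (Equivalently, on the fixed-point side $c_*$ is induced by $u(s,t)\mapsto c(u(s,1-t))$ on Floer trajectories, for a $c$-anti-invariant almost complex structure.)

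For the invariance of $A$, recall from Mak--Wu that $\Gamma_{\tau_S^{-1}}$ arises from $\Delta$ by a Lagrangian surgery along $S\times S$, realised by a Lagrangian cobordism $V\subset\mathbb{C}\times M\times M^-$ with ends $\Delta$, $S\times S$ and $\Gamma_{\tau_S^{-1}}$, and that $A\in\HF^0(\Delta,\Gamma_{\tau_S^{-1}})$ is the morphism $\Delta\to\Gamma_{\tau_S^{-1}}$ extracted from $V$ --- equivalently, the element for which $\mu^2(A,-)$ is the map $\HF^k(Q,N)\to\HF^k(Q,\tau_S(N))$ in \eqref{I:eqLES}. I would extend $\rho$ to the symplectic involution $\hat\rho=\mathrm{id}_{\mathbb{C}}\times\rho$ of $\mathbb{C}\times M\times M^-$, which fixes each end of $V$ individually and respects the cylindrical ends, and then verify that $\hat\rho(V)$ is Hamiltonian isotopic to $V$ rel ends. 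Granting this, the construction of $A$ from $V$ is natural under $\hat\rho$, so $\rho_*(A)=A$, that is $c_*(A)=A$; here one uses that $\rho$ acts as the identity on the Floer cohomology units of $\Delta$ and $\Gamma_{\tau_S^{-1}}$, and that equivariant almost complex structures can be chosen so that the relevant Floer and cobordism moduli spaces are cut out transversally --- routine under the asphericity assumptions, which also remove the Novikov-coefficient subtleties that otherwise accompany an anti-symplectic map.

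The main obstacle is this last equivariance statement for Mak--Wu's cobordism. Their construction is modelled on the explicit geometry of the Dehn twist in $T^*S^n$ near the clean intersection $\Delta\cap(S\times S)\cong S$ and involves auxiliary framing and profile data; one must check that the normalised involution near $S$ is compatible with all of these choices, so that $V$ can be taken $\hat\rho$-invariant (or at least $\hat\rho(V)\simeq V$ rel ends). For $\iota^*c\simeq\mathrm{id}$ the involution is isotopic to the identity near the relevant locus and this is immediate; for $\iota^*c\simeq r$ the induced base map is orientation-reversing, and one must argue --- by the same mechanism that makes $r$ compatible with the Dehn twist itself --- that this reflection preserves the surgery data up to isotopy. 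Confirming that the abstractly-defined $c_*$ agrees with $\rho_*$ is a further (minor) bookkeeping step.
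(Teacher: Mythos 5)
Your plan coincides with the paper's: (i) show $c\tau_S c\simeq\tau_S^{-1}$, (ii) pass to $M\times M^-$ via the symplectomorphism $\rho=(\text{swap})\circ(c\times c)$ (which is exactly the paper's $\Phi$) and identify $c_*$ with $\rho_*$ on $\HF^*(\Delta,\Gamma_{\tau_S^{-1}})$, and (iii) show that $\mathrm{id}_{\mathbb{C}}\times\rho$ takes the Mak--Wu cobordism $V_{MW}$ to a Hamiltonian-isotopic one, from which $\rho_*(A)=A$ follows by naturality of the cone morphism. You correctly identify (iii), the Hamiltonian invariance $\Phi(V_{MW})\simeq V_{MW}$, as the core of the argument --- this is precisely the paper's Theorem~\ref{Ioutline_of_proof:thm:invariance}, to which Section~\ref{sec:INV} is devoted.

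However, your proposal stops short of proving (iii). You write ``I would \ldots verify that $\hat\rho(V)$ is Hamiltonian isotopic to $V$ rel ends. Granting this\ldots'', then name it ``the main obstacle'' and describe only what would have to be checked. That check is exactly where the real content lies. The paper does it by first normalising $c$ near $S$ to a linear model $c_0^*$ (the cotangent lift of $\mathrm{id}$ or $r$) via a compactly supported Hamiltonian isotopy (Proposition~\ref{INVlinear:prop:main}), then proving the two flow identities in Lemma~\ref{INVlinear:lem:properties} ($c_0^*(\xi)=-\phi_s^{\sigma}(-c_0^*(-\phi_s^{\sigma}(-\xi)))$ and norm preservation), and then writing down an explicit Hamiltonian isotopy $\Psi_t(\xi_1,\xi_2,p)=(\psi_t(\xi_1),-\psi_t(-\xi_2),p)$ which preserves the two flat ends, commutes with $\pi_{\mathbb{C}}$, and carries the flow handle $\hat{H}_\nu$ (and separately the suspension part of the cobordism) onto its $\Phi$-image. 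None of this is in your sketch, and the claim that ``for $\iota^*c\simeq r$ \ldots one must argue --- by the same mechanism that makes $r$ compatible with the Dehn twist itself'' is a statement of the difficulty rather than a proof. Without carrying out this step, the conclusion $c_*(A)=A$ is not established.

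A minor further point: your justification that $c\tau_S c^{-1}\simeq\tau_S^{-1}$ invokes that the Dehn twist ``depends on the parametrisation only through its smooth isotopy class.'' That is not known in general (see Remark~\ref{Ioutline_of_proof:rmk:setting}(3)); what is known, and what you in fact need, is the weaker statement for parametrisations differing by a map isotopic to an isometry. Since your hypothesis gives exactly that, the argument can be repaired with the correct citation, but as written the reasoning overreaches.
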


\begin{rem}
$c_* \colon \HF^*(\tau_S^{-1}) \to \HF^*(\tau_S^{-1})$ is an involution
of a vector space over a field with characteristic $2$. Any such map
has a fixed point because $(c_*-\mathrm{id})^2=0$, hence $\mathrm{ker}(c_*-\mathrm{id})\neq 0$. The relevance of the second part of Theorem \ref{I:thm:main} is therefore not merely the existence of a fixed point. It should rather be understood as a special property of the element $A$.
\end{rem}
\subsection{Examples.}\label{subsec:Iexamples}
The assumption on the isotopy class of $\iota^*c$ is automatically satisfied for $n=1,2,3$.
As already mentioned, the assumption is equivalent to $M$ being a real fiber of a real Lefschetz fibration. This is the content of the following
\begin{propx}\label{prop:Lefschetz}
Let $M$ be a symplectic manifold, $S\subset M$ a Lagrangian sphere with parametrization $\iota$ and $c\colon M \to M$ an anti-symplectic involution.
Then the following statements are equivalent:
\begin{enumerate}
    \item[(i)]$c(S)=S$ and $\iota^*c\simeq \mathrm{id}$ or $\iota^*c\simeq r$. \item[(ii)] There exists a real Lefschetz fibration (see Sections \ref{subsec:REALmonodromy} and \ref{subsec:REALdef}) $\pi\colon E \to \mathbb{D}^2$ with real structure $c_E\colon E \to E$, real fiber $M=\pi^{-1}(1)$ and vanishing sphere $(S,\iota)$ such that
$c_E$ restricts to a real structure on $M$ that is Hamiltonian isotopic to $c$.
\end{enumerate}
\end{propx}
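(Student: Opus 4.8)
The plan is to prove the two implications separately: the direction (ii)~$\Rightarrow$~(i) is essentially reading off the local model of a real Lefschetz critical point, while (i)~$\Rightarrow$~(ii) is a cut-and-paste construction of the total space, carried out equivariantly.

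For (ii)~$\Rightarrow$~(i), assume $\pi\colon E\to\mathbb{D}^2$ is a real Lefschetz fibration with real structure $c_E$ covering complex conjugation, a single critical point, and $M=\pi^{-1}(1)$. Since $c_E$ is a diffeomorphism intertwining $\pi$ with conjugation, it permutes the critical values; there being only one, that value is real, $c_E$ preserves the critical fibre and hence its unique critical point, and near that point the germ of $(\pi,c_E)$ is, after an equivariant coordinate change, the standard model $\bigl(\sum_i z_i^2,\ z\mapsto A\bar z\bigr)$ on $\mathbb{C}^{n+1}$ with $A\in O(n+1)$, $A^2=\mathrm{id}$ (Section~\ref{subsec:REALdef}). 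Complex conjugation fixes the real segment from the critical value to $1$ pointwise, so $c_E$ preserves the Lefschetz thimble over that segment and therefore its boundary, the vanishing sphere $S\subset M$; hence $c_E|_M(S)=S$, and since $c_E|_M$ is Hamiltonian isotopic to $c$ we get $c(S)=S$. Parallel transport along the segment conjugates $c_E|_S$ to $A|_{S^n}$, which is smoothly isotopic to $\mathrm{id}$ if $\det A=1$ and to $r$ if $\det A=-1$; pulling back along $\iota$ gives $\iota^*c\simeq\mathrm{id}$ or $\iota^*c\simeq r$.

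For (i)~$\Rightarrow$~(ii), I would run the standard construction exhibiting a Lagrangian sphere as a vanishing cycle, keeping track of real structures. Start from the model fibration $\sum_i z_i^2\colon\mathbb{C}^{n+1}\to\mathbb{C}$ over a small disc (suitably thickened), whose regular fibre is symplectomorphic to a neighbourhood of the zero section $Z$ in $T^*S^n$ and whose vanishing sphere is $Z$, and glue it to the trivial fibration $M\times(\mathbb{D}^2\setminus\{0\})$ along a collar of the singular fibre via a Weinstein neighbourhood identification of a neighbourhood of $S$ with a neighbourhood of $Z$. Endow $M\times\mathbb{D}^2$ with the anti-symplectic involution $c\times(w\mapsto\bar w)$ and the local model with $(z,w)\mapsto(A\bar z,\bar w)$, where $A\in O(n+1)$ is a linear involution in the conjugacy class of $\iota^*c$, so that $\det A=1$ if $\iota^*c\simeq\mathrm{id}$ and $\det A=-1$ if $\iota^*c\simeq r$, and the second map covers complex conjugation on $\mathbb{D}^2$. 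The glued object is a real Lefschetz fibration provided the Weinstein identification intertwines $c$ near $S$ with $z\mapsto A\bar z$ near $Z$, equivalently carries $(S,\iota)$ to $(Z,\text{standard})$ with $\iota^*c=A|_{S^n}$; this is supplied by an equivariant Weinstein neighbourhood theorem once $c$ has been normalized within its Hamiltonian isotopy class, and $\iota$ within its isotopy class, so that $c|_S$ is the linear model $\iota\,A|_{S^n}\,\iota^{-1}$. It then remains to check that the resulting $(\pi,c_E)$ satisfies the axioms of a real Lefschetz fibration, that $\pi^{-1}(1)=M$ with $c_E|_M$ Hamiltonian isotopic to $c$, and that the vanishing sphere is $(S,\iota)$.

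The step I expect to be the main obstacle is precisely this normalization: the hypothesis supplies only that $\iota^*c$ is smoothly \emph{isotopic} to $\mathrm{id}$ or $r$, whereas the equivariant Weinstein argument needs $c|_S$ to be genuinely \emph{conjugate}, by a diffeomorphism isotopic to the identity, to a linear involution. For $n\le 3$ this follows from the positive solution of the Smith conjecture (every periodic diffeomorphism of $S^n$ is conjugate to an orthogonal one) together with $\pi_0\Diff(S^n)=\mathbb{Z}_2$, which also allows the conjugating diffeomorphism to be taken isotopically trivial; and in the geometric setting where $(M,c)$ is the real fibre of a real Lefschetz fibration the required linear form is already built in. Granting this, the equivariant Weinstein neighbourhood theorem and the bookkeeping needed to match the paper's definition of a real Lefschetz fibration are routine.
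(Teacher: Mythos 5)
Your direction (ii)~$\Rightarrow$~(i) is essentially the paper's Lemma~\ref{lem:lefschetz}: both arguments use that $c_E$ commutes with parallel transport to conclude $c_E$ preserves the thimble and restricts linearly (hence isometrically) on the vanishing sphere. Your phrasing about an ``equivariant coordinate change'' to the standard model is a slightly stronger normalization claim than what is needed; the paper avoids it by observing directly that $c_E$ restricted to the thimble is a linear extension of $c|_S$, so $c|_S$ is orthogonal.

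There is a genuine gap in your (i)~$\Rightarrow$~(ii), and you correctly flag it as the main obstacle but do not close it. You reduce the problem to showing that $c|_S$ is \emph{conjugate}, by an isotopically trivial diffeomorphism, to an orthogonal involution of $S^n$, and then note that this is known only for $n\le 3$ (Smith conjecture). But the proposition is stated for all $n$ under the isotopy hypothesis alone, and there is no reason to expect the conjugacy claim in higher dimensions: exotic smooth involutions of $S^n$ exist that are not smoothly conjugate to linear ones, and the hypothesis ``$\iota^*c$ isotopic to $\mathrm{id}$ or $r$'' does not exclude them. Your fallback remark that ``in the geometric setting the required linear form is already built in'' is circular, since the geometric setting is exactly what (i)~$\Rightarrow$~(ii) must produce.

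The paper sidesteps conjugacy entirely via Proposition~\ref{INVlinear:prop:main}: it produces a compactly supported Hamiltonian isotopy $\psi^H_t$ of $T^*S^n$ such that $c\psi^H_1$ coincides with the \emph{linear} anti-symplectic model $c_0^*$ on a neighbourhood of the zero section. The key trick is an Alexander-type deformation of the symplectomorphism $c\circ\sigma^*$ (which fixes the zero section pointwise) to the identity, concatenated with an isotopy induced by $\sigma\simeq c_0$. This uses only that $\sigma=c|_S$ is smoothly \emph{isotopic} to a linear map, never that it is conjugate to one, and it works in all dimensions. Note also that post-composition by a Hamiltonian diffeomorphism does not in general preserve the involution property, so even granting your normalization one must be careful to formulate it so that the resulting real structure on the glued fibration squares to the identity; the paper handles this by demanding agreement with $c_0^*$ only on the neighbourhood where the gluing happens and defining the real structure piecewise. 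Replacing your conjugacy step with an argument of this type (or simply invoking Proposition~\ref{INVlinear:prop:main}) would repair the proof.
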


Seidel computed Floer cohomology of products of disjoint Dehn twists on surfaces of genus $\geq 2$ in \cite{seidel96}. 
As a special case, his result yields a $\Z$-graded isomorphism
\begin{align}\label{Iexamples:thm:pedrotti}
\HF^*(\tau_S^{-1}) \cong \mathrm{H}^*(M \backslash S; \Lambda).
\end{align}

Later, Gautschi \cite{gautschi} generalised Seidel's result to diffeomorphisms of finite type, still on surfaces.
Recently Pedrotti \cite{pedrotti} proved a $\Z_2$-graded version of (\ref{Iexamples:thm:pedrotti}) for rational, $W^+$-monotone symplectic manifolds of dimension at least $4$.
The $W^+$-condition is explained in Seidel \cite{seidel97}. It is immediate that symplectically aspherical manifolds are $W^+$-monotone.

It turns out that the automorphism $c_*$ on $HF(\tau_S^{-1})$ corresponds to the (topologically induced) map $c^*$ on singular cohomology $\mathrm{H}^*(M\backslash S; \Lambda)$. 
Namely, under the assumption that $M$ is $W^+$-monotone and that 
$c(S)=S$ the following diagram commutes:
\begin{align}\label{HFDehnTwist:diag:action}
\begin{split}
\xymatrix{
&\HF^*(\tau^{-1}_S) \ar[r]^{\cong} \ar[d]^{c_*} &\HF^*(M,S) \ar[d]^{c^*}\\
&\HF^*(\tau_S^{-1}) \ar[r]^{\cong} &\HF^*(M,S).
}
\end{split}
\end{align}
Together with Theorem \ref{I:thm:main} this allows us to deduce topological restrictions on the element $A\in \HF^*(\tau^{-1}_S)$ and sometimes enables us to compute $A$. More concrete examples are explained in section \ref{subsec:REAL2Dexamples}.

\subsection{Outline of Proof of Theorem \ref{I:thm:main}}\label{subsec:Ioutline_of_proof}
We outline the proof of Theorem \ref{I:thm:main}.

We view the Dehn twist as a monodromy in the Lefschetz fibration
$\pi\colon E \to \mathbb{D}^2$ from Proposition \ref{prop:Lefschetz}.
Carrying a result by Salepci \cite{Salepci} over to the symplectic setting, one gets
\begin{propx}\label{R:prop}
$\tau$ is Hamiltonian isotopic to $c \circ \tilde{c}$ for some anti-symplectic involution $\tilde{c}
\colon M \to M$. In particular, $c\tau_S c$ is Hamiltonian isotopic to $\tau_S^{-1}$.
\end{propx}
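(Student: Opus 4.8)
The plan is to follow Salepci's argument in the real topological setting and transport it to the symplectic category using the results already in hand. First I would pass to the real Lefschetz fibration $\pi\colon E \to \mathbb{D}^2$ furnished by Proposition \ref{prop:Lefschetz}, with real structure $c_E$ restricting to the real structure $c$ on the real fiber $M = \pi^{-1}(1)$ (up to Hamiltonian isotopy). The key geometric input is that the base circle $\partial \mathbb{D}^2 = S^1$ carries a complex conjugation $z \mapsto \bar z$ covered by $c_E$, and the symplectic monodromy $\tau$ of the fibration around $\partial \mathbb{D}^2$ can be decomposed via the two fixed real points $\pm 1 \in S^1$. Concretely, parallel transport from $1$ to $-1$ along the upper semicircle gives a symplectomorphism $h\colon M \to M^- := \pi^{-1}(-1)$, and parallel transport back along the lower semicircle is, by $c_E$-equivariance of the symplectic connection (which holds because $c_E$ is anti-symplectic and preserves the horizontal distribution), conjugate to $h$ by the fiberwise real structures. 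This produces the factorization $\tau \simeq c \circ \tilde c$ where $\tilde c$ is the composition of $h$ with the real structure on $M^-$, pulled back to $M$; one then checks $\tilde c$ is an honest anti-symplectic involution by the same fixed-point/equivariance bookkeeping.

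Next I would carry out the following concrete steps in order. Step 1: set up the $c_E$-equivariant symplectic parallel transport, verifying that the horizontal distribution defined by the fibration symplectic form can be chosen $c_E$-invariant (average over the $\Z_2$-action if necessary; this only affects things up to Hamiltonian isotopy, which is all we need). Step 2: define $\tilde c := (\text{real structure on } M) \circ h^{-1} \circ (\text{real structure on } M^-) \circ h$ or the appropriate such composite, and show it squares to the identity using that both fiberwise real structures are involutions and $h$ intertwines the upper/lower transport maps. Step 3: verify $\tilde c$ is anti-symplectic: it is a composition of an even number of anti-symplectic maps and symplectomorphisms, so it is symplectic — wait, that is the wrong parity; I would instead realize $\tilde c$ as a single "half-monodromy composed with conjugation" so that it is visibly anti-symplectic, exactly paralleling how $\tau = c\tilde c$ forces $\tilde c = c\tau$ to be anti-symplectic since $c$ is anti-symplectic and $\tau$ is symplectic. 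Step 4: conclude $\tau \simeq c\tilde c$ as Hamiltonian isotopy classes, and then $c\tau_S c \simeq c(c\tilde c)c = \tilde c c = (c\tilde c)^{-1} \simeq \tau_S^{-1}$, using $c^2 = \mathrm{id}$ and that $c\tilde c = (\tilde c c)^{-1}$ precisely because both factors are involutions.

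For the last display I should be careful: from $\tau_S \simeq c\tilde c$ we get $c\tau_S c \simeq c c \tilde c c = \tilde c c$, and since $c$ and $\tilde c$ are involutions, $(\,c\tilde c\,)^{-1} = \tilde c^{-1} c^{-1} = \tilde c c$, hence $c\tau_S c \simeq (c\tilde c)^{-1} \simeq \tau_S^{-1}$, where the final isotopy uses that Hamiltonian isotopy is preserved under taking inverses. The main obstacle I anticipate is Step 1 — genuinely producing a $c_E$-equivariant symplectic connection on the total space and controlling the resulting parallel transport maps up to the correct (Hamiltonian, not merely symplectic) isotopy. Salepci works smoothly, where equivariant connections are easy to average; in the symplectic setting one must check that the canonical symplectic connection associated to the fibration form $\Omega$ is already $c_E$-invariant when $\Omega$ is chosen $c_E$-anti-invariant (which can be arranged), and that the closedness/compact-support conditions needed to stay within the Dehn twist isotopy class are preserved. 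The rest is essentially formal algebra with involutions once the equivariant transport is in place.
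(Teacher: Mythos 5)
Your proposal is correct and follows essentially the same Salepci-style route as the paper: split the monodromy into parallel transport along the upper and lower semicircles, use $c_E$-equivariance of the symplectic connection to rewrite the lower-half transport as $c_E \circ (P_{1;\frac{1}{2}})^{-1} \circ c_E$, and obtain $\tau = c_+\circ c_-$ with $c_- = (P_{1;\frac{1}{2}})^{-1}\circ c_E\circ P_{1;\frac{1}{2}}$, which is visibly an anti-symplectic involution. The one place you hedge unnecessarily is Step~1: no averaging is needed, since by definition of a real Lefschetz fibration $c_E$ is anti-symplectic ($c_E^*\Omega_E=-\Omega_E$) and fiber-preserving, so it automatically preserves the $\Omega_E$-orthogonal complement of the vertical distribution — this is exactly the identity $(dc_E)(H_{w})=H_{c_E(w)}$ the paper invokes.
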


Floer-theoretic considerations yield a homomorphism
    \[
        c_* \colon \HF^*(\tau_S^{-1}) \to \HF^*(\tilde{c}\tau_S \tilde{c}).
    \]
Proposition \ref{R:prop} implies that $\tilde{c}\tau_S \tilde{c} \simeq \tau_S^{-1}$
and therefore $\HF^*(\tilde{c}\tau_S \tilde{c}) \cong \HF^*(\tau_S^{-1})$.
It follows that $c$ induces an automorphism of $\HF^*(\tau_S^{-1})$,
which proves the first part of Theorem \ref{I:thm:main}.

To show that $c_*(A) = A$, we adopt the framework of Biran-Cornea \cite{BC1}, \cite{BC2}, \cite{BC3}
and Mak-Wu \cite{MakWu} about Lagrangian cobordisms.

Let $M^-$ be the symplectic manifold $(M,-\omega)$.
We denote by $\Gamma_{\phi} \subset M \times M^-$ the graph of $\phi$ for a symplectomorphism $\phi$ on $M$. This is a Lagrangian submanifold of $M\times M^-$.
For $\phi=\mathrm{id}$ it is the diagonal and we write $\Delta:= \Gamma_{\mathrm{id}}$.
In \cite{MakWu} the authors construct a Lagrangian cobordism $V_{MW} \subset M\times M^- \times \C$
that has three ends: $S\times S, \Delta$ and $\Gamma_{\tau_S^{-1}}$.
We recall the construction of $V_{MW}$ in section \ref{sec:MW}.
By general results on Lagrangian cobordisms due to Biran-Cornea this cobordism induces an exact triangle in $D\mathcal{F}uk(M\times M^-)$:

\begin{center}
\begin{tikzpicture}[node distance=1cm, auto]
\node (P) {$S\times S$};
\node (phantom)[right of =P] {};
\node(Q)[right of=phantom, below of=phantom] {$\Delta$};
\node (qhantom)[left of =Q] {};
\node (B) [left of =qhantom, below of=qhantom] {$\Gamma_{\tau_S^{-1}}$};

\draw[->](P) to node {}(Q);
\draw[->](Q) to node {}(B);
\draw[->] (B) to node {} (P);
\end{tikzpicture}
\end{center}
The associated long exact sequence is 
\begin{align}\label{I:eqLES2}
    \dots \to \HF^k(K,S\times S) \to \HF^k(K,\Delta) \to \HF^k(K,\Gamma_{\tau_S^{-1}}) \to \HF^{k+1}(K,S\times S) \to \dots,
\end{align}
where $K$ is an admissible Lagrangian submanifold in $M\times M^-$.
For the special case $K=Q\times N$, this sequence reduces to Seidel's long exact sequence (\ref{I:eqLES}).
The middle map in sequence (\ref{I:eqLES2}) can be understood as $\mu^2(A,-)$ for the element $$A\in \HF^0(\Delta, \Gamma_{\tau_S^{-1}}) \cong \HF^0(\tau_S^{-1}).$$

Consider the symplectomorphism
\begin{align*}
\Phi \colon M \times M^- \times \C &\longrightarrow M \times M^- \times \C \\
(x,y,z) &\longmapsto (c(y),c(x),z).
\end{align*}
$\Phi$ preserves the ends of the cobordisms $V_{MW}$. In particular,
$\Phi$ induces an automorphism 
$$\Phi_* \colon \HF^*(\Delta, \Gamma_{\tau^{-1}_S}) \to \HF^*(\Delta, \Gamma_{\tau^{-1}_S}).$$
This automorphism corresponds to the action of $c$ on $\HF(\tau^{-1}_S)$, namely
the following diagram commutes
\begin{align}\label{Ioutline:diag:phi_c}
\xymatrix{
&\HF^*(\tau^{-1}_S) \ar[r]^{c_*} \ar[d]^{\cong} &\HF^*(\tau^{-1}_S) \ar[d]^{\cong}\\
&\HF^*(\Delta, \Gamma_{\tau^{-1}_S}) \ar[r]^{\Phi_*} &\HF^*(\Delta, \Gamma_{\tau^{-1}_S}).
}
\end{align}
We explain these isomorphisms and the commutativity of the diagram in section \ref{sec:HF}.
A major step in the proof is the following 
\begin{thmx}\label{Ioutline_of_proof:thm:invariance}
$\Phi(V_{MW})$ is Hamiltonian isotopic to $V_{MW}$.
\end{thmx}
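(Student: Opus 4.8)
The plan is to exhibit an explicit Hamiltonian isotopy of $M\times M^-\times\C$ carrying $V_{MW}$ to $\Phi(V_{MW})$, by tracking the construction of $V_{MW}$ through the map $\Phi$ step by step. Recall that Mak--Wu build $V_{MW}$ from the trace of the surgery/isotopy data of the Dehn twist inside $M\times M^-$; the key features are (a) its three ends $S\times S$, $\Delta$, $\Gamma_{\tau_S^{-1}}$ over prescribed rays in $\C$, and (b) the underlying Lagrangian in $M\times M^-$ built out of $S\times S$, the diagonal $\Delta$, and the graph $\Gamma_{\tau_S^{-1}}$ via Lagrangian surgery. First I would record how $\Phi$ acts on each of these pieces: $\Phi(S\times S)=c(S)\times c(S)=S\times S$ since $c(S)=S$; $\Phi(\Delta)=\{(c(y),c(x)):x=y\}=\{(c(y),c(y))\}=\Delta$; and $\Phi(\Gamma_{\tau_S^{-1}})=\{(c(\tau_S^{-1}x),c(x))\}$, which is the graph of $c\,\tau_S\,c$ read in the appropriate order, and by Proposition \ref{R:prop} this is Hamiltonian isotopic to $\Gamma_{\tau_S^{-1}}$. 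The $\C$-factor is fixed by $\Phi$, so the ends are preserved setwise up to the Hamiltonian isotopy of Proposition \ref{R:prop}.

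Next I would upgrade this end-by-end matching to a global statement. The map $(x,y)\mapsto(c(y),c(x))$ on $M\times M^-$ is a symplectomorphism (the composite of the anti-symplectic $c\times c\colon M\times M^-\to M^-\times M$ with the swap $M^-\times M\to M\times M^-$, which is itself anti-symplectic on the product with the given sign convention — so the composite is genuinely symplectic). I would show it intertwines the Mak--Wu surgery construction with itself: the surgery of $\Delta$ with $\Gamma_{\tau_S^{-1}}$ along their clean/transverse intersection locus is natural under any symplectomorphism preserving the two summands, and $\Phi$ maps the intersection $\Delta\cap\Gamma_{\tau_S^{-1}}=\{(x,x):\tau_S^{-1}x=x\}=\mathrm{Fix}(\tau_S)$ to itself because $c$ preserves $\mathrm{Fix}(\tau_S)$ (as $c\tau_Sc\simeq\tau_S^{-1}$ and, more directly, $c$ preserves $S$ and hence the fixed-point set of the Dehn twist). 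Thus $\Phi$ carries the Mak--Wu Lagrangian to the Mak--Wu Lagrangian built from the $\Phi$-images of the input data; combined with the handle/neck data being determined by these inputs, $\Phi(V_{MW})$ is exactly the Mak--Wu cobordism associated to $(S\times S,\Delta,\Gamma_{c\tau_Sc})$.

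It then remains to interpolate between the Mak--Wu cobordism for $\Gamma_{\tau_S^{-1}}$ and the one for $\Gamma_{c\tau_Sc}$. Proposition \ref{R:prop} gives a Hamiltonian isotopy $\psi_t$ of $M$ with $\psi_1(\tau_S^{-1})=c\tau_Sc\,$ up to isotopy, equivalently $\Gamma_{\psi_1}$-conjugation moves one graph to the other through graphs $\Gamma_{\phi_t}$ of symplectomorphisms $\phi_t$; this can be realized by a Hamiltonian isotopy of $M\times M^-$ (push $\Gamma_{\phi_t}$ along the suspension of the generating Hamiltonian in the second factor) that fixes $S\times S$ and $\Delta$ setwise — or, if one prefers, fixes them only up to a further small compactly supported correction near the surgery region. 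Running the Mak--Wu construction with the parameter $t$ produces an ambient Hamiltonian isotopy of $M\times M^-\times\C$ carrying $V_{MW}=V_{MW}(\tau_S^{-1})$ to $V_{MW}(c\tau_Sc)=\Phi(V_{MW})$. Concatenating with the naturality identification of the previous paragraph finishes the argument.

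The main obstacle I anticipate is the last interpolation step done with enough care that the isotopy is genuinely \emph{ambient Hamiltonian} and not merely a Lagrangian isotopy, while simultaneously not disturbing the other two ends: the Mak--Wu cobordism's behaviour near the triple-end region and near the surgery neck is delicate, and one must check that conjugating the graph end by a Hamiltonian isotopy of $M$ extends to a compactly supported (or at least admissible, in the Biran--Cornea sense) Hamiltonian isotopy of the total space that leaves the cylindrical ends over $S\times S$ and $\Delta$ unchanged. A secondary technical point is verifying the sign/convention bookkeeping that makes $\Phi$ symplectic rather than anti-symplectic on $M\times M^-\times\C$, and that the complex conjugation (or lack thereof) one might expect on the $\C$-factor is indeed unnecessary here; this should follow from the Mak--Wu conventions but deserves an explicit check.
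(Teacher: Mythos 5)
Your high-level strategy — show that $\Phi$ preserves the three ends and then argue that $\Phi$ ``commutes'' with the Mak--Wu construction — is in the right spirit, but it has a genuine gap at the exact point you gloss over with ``the handle/neck data being determined by these inputs.'' The Mak--Wu handle $\hat{H}_\nu$ is built from the geodesic flow $\psi^\sigma$ for the round metric inside a fixed Weinstein chart $\varphi\colon V\to T^*_\epsilon S^n$, and $\Phi$ acts on this chart via the anti-symplectomorphism $\varphi^* c$ of $T^*S^n$, which covers $\iota^*c$ on the zero-section but is in general \emph{not} of the fiberwise-linear form $c_0^*$ induced by an isometry. An arbitrary anti-symplectomorphism of $T^*S^n$ does not preserve $\sigma(\xi)=\|\xi\|$ and so does not map the handle to another handle of the same shape. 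This is precisely why the paper proves Proposition~\ref{INVlinear:prop:main}: one first normalizes $c$, by a compactly supported Hamiltonian isotopy supported near $S$, so that it agrees with $c_0^*$ (for $c_0=\mathrm{id}$ or $r$) on a smaller cotangent disc bundle; only then does Lemma~\ref{INVlinear:lem:properties} give the flow-compatibility identities needed to match $\Psi_1(\hat{H}_\nu)$ with $\Phi(\hat{H}_\nu)$. This normalization step is also exactly where the assumption $\iota^*c\simeq\mathrm{id}$ or $\iota^*c\simeq r$ is consumed, and it does not appear anywhere in your argument.

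There is also a factual error in your description of the construction: the surgery producing $\Gamma_{\tau_S^{-1}}$ is between $S\times S$ and $\Delta$ along $\Delta_S = (S\times S)\cap\Delta\cong S$, not between $\Delta$ and $\Gamma_{\tau_S^{-1}}$ along $\mathrm{Fix}(\tau_S)$. Once the normalization of $c$ is in hand, the paper's proof does not need your separate ``interpolation'' between the cobordism for $\Gamma_{\tau_S^{-1}}$ and the one for $\Gamma_{c\tau_S c}$: the Hamiltonian isotopy $\Psi_t$ built from the normalizing isotopy does everything at once. It fixes $S\times S\times\R$ and $N^*_{\Delta_S}\times\{p\}$ (hence the two cylindrical ends over $S\times S$ and $\Delta$), is constant in the $\C$-factor, carries the handle to $\Phi$ of the handle, and the same family then handles the suspension piece glued in afterward. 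You are right to worry that the interpolation must be an \emph{ambient} Hamiltonian isotopy keeping the two fixed ends untouched; the paper sidesteps that worry entirely by building the isotopy so that those ends are pointwise or setwise fixed throughout, rather than first constructing a cobordism for a different graph and then trying to deform it. If you want to complete your argument, the minimal fix is to insert the normalization lemma as the first step and then verify the flow identities as in the paper.
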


We show how this implies Theorem \ref{I:thm:main}.
Denote by $\bar{A}\in \HF^*(\Delta, \Gamma_{\tau^{-1}_S})$ the element corresponding
to $A\in \HF^*(\tau^{-1}_S)$ under the natural isomorphism
$\HF(\tau^{-1}_S) \cong \HF(\Delta, \Gamma_{\tau^{-1}_S})$.
As a consequence of Theorem \ref{Ioutline_of_proof:thm:invariance}, the cobordisms $V_{MW}$ and $\Phi(V_{MW})$
induce isomorphic triangles. In particular, the following diagram commutes:
\begin{align*}
\xymatrix@C+2pc{
&\HF^*(K,\Delta) \ar[r]^{\mu^2(\bar{A}, -)} \ar[d]^{\Phi_*} & \HF^*(K, \Gamma_{\tau^{-1}_S}) \ar[d]^{\Phi_*}\\
&\HF^*(K, \Delta) \ar[r]^{\mu^2(\Phi_*(\bar{A}), -)} & \HF^*(K, \Gamma_{\tau^{-1}_S})
}
\end{align*}
for all $K$. 
It follows that $\Phi_*(\bar{A})= \bar{A}$ and hence $c_*(A) =A$ by commutativity of diagram (\ref{Ioutline:diag:phi_c}).

\begin{rem}\label{Ioutline_of_proof:rmk:setting}
\begin{enumerate}
    \item 
It can be seen from the proof that all is needed are well-defined Floer cohomology groups, and applicability of Biran-Cornea's \cite{BC1},\cite{BC2} and Mak-Wu's \cite{MakWu} framework.
One could therefore easily weaken the asphericity assumption to monotonicity conditions. 
    \item The assumption that $M$ is closed is important for our arguments:
    The version of Floer cohomology we use only works for compactly supported symplectomorphisms. In general however, the monodromy in a Lefschetz fibration with non-compact fibers, if it exists, is not compactly supported. 
    We expect that the results generalize to a non-compact framework,
    when working with an appropriate version of Floer theory.
    \item In general we have a symplectic isotopy
\[
 c\circ \tau_{(S,\iota)} \circ c \simeq \tau_{(S,c\circ \iota)}^{-1}.
\]
However, it is unknown how the Dehn twist depends on the parametrization of the sphere. It is only known that if $\iota^*c$ is isotopic to an isometry, then the Dehn twist associated to $c\circ \iota$ is symplectically isotopic to the Dehn twist associated to $\iota$
\cite[Remark 3.1]{seidelthesis97}.
This explains why we make the assumption on the mapping class of 
$\iota^*c$.
    \item     
The second map in the long exact sequence (\ref{I:eqLES}) is 
\[
\mu^2(a_N,-) \colon \HF^k(Q,N) \to \HF^k(Q,\tau_S(N))
\]
for some element $a_N\in \HF^0(N,\tau_S(N))$.
$a_N$ and $A\in \HF^*(\tau_S^{-1})$ are related as follows.
There is an operation
\[
    * \colon \HF^*(\tau_S^{-1}) \otimes \HF^*(N,N) \to \HF^*(N, \tau_S(N)).
\]
If $e_N \in HF^*(N,N)$ denotes the unit, we have $A * e_N = a_N$.
The fixed point property $c_*(A) = A$ then implies 
\begin{align}\label{eq:seidel}
     \gamma(a_N) = a_{c(N)},
\end{align}
where $\gamma$ is the isomorphism
\begin{align*}
\HF^*(N,\tau_S(N)) 
\cong 
\HF^*(\tilde{c}(N), c(N)) \cong
\HF^*(c(N), \tau_S(c(N)).
\end{align*}
The construction of $a_N$ is explained in \cite[Sections 17a-17c]{seidelbook}.
$a_N$ comes from counting the number of holomorphic sections of a Lefschetz fibration with moving boundary condition coming from moving $N$ via parallel transport. The invariance property (\ref{eq:seidel}) can be proven directly in Seidel's
framework, by observing that the holomorphic sections for boundary conditions coming from $N$ and $c(N)$ are in bijection.
\end{enumerate}
\end{rem}
\subsection{Organisation of the Paper.}\label{subsec:Iorg}
The rest of this paper is organised as follows.
In section \ref{sec:REALlefschetz} 
we explain the construction of real Lefschetz fibrations and the decomposition of the monodromy into two anti-symplectic involuions as stated in Propositions \ref{prop:Lefschetz} and \ref{R:prop}.
In section \ref{sec:HF} we fix the setting and collect the properties of Floer cohomology we need.
In section \ref{sec:LC} we briefly recall Biran-Cornea's Lagrangian cobordism framework and how cobordisms induce cone decompositions.
Section \ref{sec:MW} recalls the construction of the Mak-Wu cobordism. In section 
\ref{sec:INV} we prove Theorem \ref{Ioutline_of_proof:thm:invariance} about the symmetry of the cobordism. 
Section \ref{sec:ADD} contains some more background material on Floer cohomology
for the convenience of the reader.
The \hyperref[appendix]{appendix} contains some algebraic background on Fukaya categories.

\section{Dehn twist and real Lefschetz fibrations.}\label{sec:REALlefschetz}
In this section we show Propositions \ref{prop:Lefschetz} and \ref{R:prop}.
This is based on work by Salepci \cite{Salepci} on real Lefschetz fibrations in the smooth setting.
Since we keep the discussion here relatively brief, we refer the interested reader to the following references for a more detailed treatment of (real) Lefschetz fibrations:
\cite{seidelbook, BC3, salepci2, keating}.
\subsection{Dehn twist.}\label{subsec:REALDehntwist}

Let $S \subset M$ be a Lagrangian sphere together with an embedding
$\varphi \colon S^n \to M$ of the $n$-dimensional sphere $S^n$ with image $S$.
We refer to $(S,\varphi)$ as a parametrized Lagrangian sphere. 
\footnote{Seidel uses the word ``framed sphere" for this situation in \cite{seidelbook}.}
The Dehn twist $\tau_S$ along $S$ is a symplectomorphism compactly supported in a neighbourhood of $S$. It is defined up to symplectic isotopy. The precise map will depend on a Dehn twist profile function and on a Weinstein neighbourhood of $S$.
As explained in \cite[Proposition 2.3]{seidelthesis97} the symplectic isotopy class of $\tau_S$ is independent of $\varphi$ in dimension $4$.
In general however, it might depend on the parametrization \cite[Remark 3.1]{seidelbook}.
We briefly recall the definition, following closely the exposition in \cite{MakWu}.
\begin{defi}
Let $\epsilon > 0$.
A \textit{Dehn twist profile function} is a smooth function
$$\nu_\epsilon^{Dehn} \colon \R_{\geq0} \longrightarrow \R$$
satisfying
\begin{align*}
\begin{cases}
	 \nu_\epsilon^{Dehn}(r) = \pi - r \qquad \qquad \qquad \qquad \qquad
                                       &\text{for } 0\leq r << \epsilon,\\
	 0 < \nu_\epsilon^{Dehn}(r) < \pi \text{ and strictly decreasing}
	 									\qquad &\text{for } 0< r < \epsilon,\\
	 \nu_\epsilon^{Dehn}(r)=0 \qquad \qquad \qquad \qquad \qquad \qquad
	                                 &\text{for } r\geq \epsilon.
\end{cases}
\end{align*}
\end{defi}

Consider the canonical Riemannian metric on $S^n$.
We have a canonical isomorphism $T_*S^n\cong T^*S^n$ and we denote by $\norm{\xi}$ the norm of the tangent vector identified with $\xi \in T^*S$. We denote by 
\[
T_r^*S^n =  \left \{ \xi \in T^*S^n \; \mid \; \norm{\xi} < r \right \}
\]
the open subset of $T^*S^n$ consisting of cotangent vectors of norm strictly less than $r$.

Let $V\subset M$ be a Weinstein neighbourhood of $S$ together with a symplectic embedding
$$\varphi \colon V \longrightarrow T^*S^n$$
that identifies $S\subset V$ with the zero-section $0_{S^n} \cong S^n$ via $\iota$ and $\varphi(V) = T_\epsilon^*S^n$ 
for some $\epsilon>0$. 

\noindent
Consider the continuous function $\sigma \colon T^*S^n \longrightarrow \R, \, \sigma(\xi) = \norm{\xi}$. This function is not smooth on the zero-section $0_{S^n}$, but has a well-defined Hamiltonian flow on the complement:
$$\psi^{\sigma}_t \colon \left(T^*S^n\right) \backslash 0_{S^n} \longrightarrow \left(T^*S^n \right) \backslash 0_{S^n}.$$
\begin{defi}
The \textit{model Dehn twist} on $T^*S^n$ is the
diffeomorphism defined by
\begin{align*}
\tau_{S^n} \colon T^*S^n &\longrightarrow T^*S^n, \\
\xi &\longmapsto 
\begin{cases}
	\psi_{\nu_\epsilon^{Dehn}(\sigma(\xi))}^\sigma (\xi) &\text{ for } \xi \notin 0_{S^n},\\
	-x \qquad \qquad &\text{ for } \xi=x\in S^n.
\end{cases}
\end{align*}
The \textit{Dehn twist} in $M$ along $S$ is then given by copying the model Dehn twist into $V$ via $\varphi$:
\begin{align*}
\tau_S= 
\begin{cases}
    \varphi^{-1} \circ \tau_{S^n} \circ \varphi \qquad &\text{on $V$}\\
    \mathrm{id} \qquad  &\text{on $M\backslash V$}.
\end{cases}
\end{align*}
\end{defi}

\subsection{The Dehn twist as a monodromy.}\label{subsec:REALmonodromy}
We adopt here the definition used in \cite{BC3}. We denote by $\mathbb{D}^2$ the closed unit disc viewed as a subset of $\C$. A Lefschetz fibration with base $\mathbb{D}^2$ consists of 
\begin{enumerate}
    \item a closed symplectic manifold $(E,\Omega_E)$ endowed with an almost complex structure $J_E$,
    \item a proper $(J_E,i)$-holomorphic map $\pi \colon E \to \mathbb{D}^2$
\end{enumerate}
such that 
\begin{enumerate}
    \item $\pi$ has only finitely many critical points with distinct critical values,
    \item all the critical points of $\pi$ are ordinary double points, that is for every critical point $p\in E$, there exists $J_E$-holomorphic coordinates around $p$ such that in these coordinates $\pi(z_1, \dots, z_n) = z_1^2 + \dots + z_n^2$ holds.
\end{enumerate}
For $p\in \mathbb{D}^2$ we denote by $E_p:= \pi^{-1}(\{p\})$ the fiber above $p$.
All regular fibers of $\pi$ are symplectic manifolds with symplectic form induced from $\Omega_E$.

Given a symplectic manifold $(M,\omega)$ and a parametrized Lagrangian sphere $S$, one can construct a Lefschetz fibration with smooth fiber $M$ and vanishing sphere $S$ such that the Dehn twist is symplectically isotopic to the monodromy around a critical point.
We refer the reader to \cite[Section 1]{seidel03} and \cite[Section (16e)]{seidelbook} for a detailed explanation.
We only include a very brief outline of the construction here.
Consider the following local model for $\epsilon > 0$: Let $Q \colon \C^{n+1} \to \C, Q(z_1, \dots, z_{n+1}) = z_1^2 + \dots + z_{n+1}^2$ and define the total space of the fibration to be
\[
    E^0_{\epsilon} := \left \{ z\in \C^{n+1} \, \big \vert \, \lvert Q(z) \rvert \leq 1,
    \frac{\vert z \vert ^4 - \vert Q(z) \vert ^2}{4} < \epsilon \right \}.
\]
The fibration then is $\pi_{\epsilon}^0 \colon E^0_{\epsilon} \to \mathbb{D}^2, \pi(z) = Q(z).$
The symplectic form on $E^0_{\epsilon}$ is of the form $\Omega_0 + \mathrm{d}\gamma$, where $\Omega_0$ is the standard symplectic form on $\C^{n+1}$
and $\gamma$ a certain $1$-form whose precise form is not relevant to us. Its effect is, that the fibration $\pi^0_{\epsilon}$ is trivial near the boundary.
The smooth fibers are symplectomorphic to $T^*_{\epsilon}S^n$.
Consider the family of Lagrangian spheres
\[
    \Sigma_r = \sqrt{r}S^{n} = \{ (\sqrt{r}z_1, \dots , \sqrt{r}z_{n+1}) \, \big \vert \, z \in S^n \subset \R^{n+1}\}  \subset \left(E_{\epsilon}^0\right)_{r}
\]
for $r>0$. They are called vanishing cycles.
The union $\Sigma = \left(\cup_{r>0} \Sigma_r\right) \cup \{0\}$ is a Lagrangian disc in $E^0_{\epsilon}$, called a Lefschetz thimble.
There is an isomorphism 
\[
    \Phi \colon E^0_{\epsilon}\backslash \Sigma \to \mathbb{D}^2 \times
    (T_{\epsilon}^*S^n \backslash S^n).
\]
The monodromy $\tau \colon (\pi_{\epsilon}^0)^{-1}({1}) \to (\pi_{\epsilon}^0)^{-1}({1})$ along $\partial \mathbb{D}^2$ is the Dehn twist along the vanishing cycle $\Sigma_1$ \cite[Lemma 1.10]{seidel03}.
To get the claimed Lefschetz fibration $\pi^0 \colon E^0 \to \mathbb{D}^2$, one glues $E^0_{\epsilon}$ together with the trivial fibration $\mathbb{D}^2 \times (M \backslash V)$ via $\varphi$.

Locally, each Lefschetz fibration looks like a model Lefschetz fibration $E^0$.
In particular, there is a notion of vanishing spheres in any Lefschetz fibration.
The monodromy $\tau \colon E_p \to E_p$ along a path around the singularity is the Dehn twist along a vanishing cycle in $E_p$.
Usually, the monodromy in not supported near $S$. However, $\tau$ is symplectically isotopic to the Dehn twist as defined
in section \ref{subsec:REALDehntwist}.

\subsection{Real Lefschetz fibrations.}\label{subsec:REALdef}
A Lefschetz fibration $\pi\colon E \to \mathbb{D}^2$ is called real, if the total space $E$ is endowed with an anti-symplectic involution $c_E\colon E \to E$ that covers complex conjugation $c_{\C}\colon \mathbb{D}^2 \to \mathbb{D}^2$, meaning the diagram
\begin{align}\label{REALdef:diag:conj}
\begin{split}
\xymatrix{
&E \ar[r]^{c_E} \ar[d]^{\pi} &E \ar[d]^{\pi}\\
&\mathbb{D}^2 \ar[r]^{c_{\C}} &\mathbb{D}^2
}
\end{split}
\end{align}
commutes.
Consider the fiber $M=E_1$ over $1$. $c_E$ induces an anti-symplectic involution on $c$ on $M$. The following Lemma shows that the assumption of Theorem
\ref{I:thm:main} is satisfied.
\begin{lem}\label{lem:lefschetz}
    $c(S)=S$ and $\iota^{-1}c\iota \simeq \mathrm{id}$ or $\iota^{-1}c\iota \simeq r$,
    where $\iota$ is the canonical framing of the vanishing sphere $S$.
\end{lem}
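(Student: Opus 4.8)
The plan is to work entirely in the local model $E^0_\epsilon \subset \C^{n+1}$, where everything is explicit, and then transfer the conclusion to a general real Lefschetz fibration by the usual local-to-global argument for Lefschetz fibrations. First I would observe that on the local model $E^0_\epsilon$ the map $\pi^0_\epsilon = Q$ with $Q(z) = z_1^2 + \cdots + z_{n+1}^2$ intertwines complex conjugation $c_{\C^{n+1}}(z) = \bar z$ on $\C^{n+1}$ with complex conjugation $c_{\C}$ on $\mathbb{D}^2$, since $Q(\bar z) = \overline{Q(z)}$; moreover $c_{\C^{n+1}}$ preserves the set defining $E^0_\epsilon$ (both $|Q(z)|$ and $|z|$ are invariant under conjugation) and, because $\Omega_0$ is the standard form and the correction $1$-form $\gamma$ is chosen compatibly, $c_{\C^{n+1}}$ is anti-symplectic for $\Omega_0 + \mathrm d\gamma$. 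Hence $c^0 := c_{\C^{n+1}}|_{E^0_\epsilon}$ is a real structure on the local model, and its restriction to the fiber over $1$ is conjugation on $T^*_\epsilon S^n \cong (E^0_\epsilon)_1$, whose fixed-point set contains the real vanishing cycle $\Sigma_1 = S^n \subset \R^{n+1}$.

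Next I would identify what this real structure does to the vanishing sphere and its framing. Under the identification of $(E^0_\epsilon)_1$ with $T^*_\epsilon S^n$ used in Section \ref{subsec:REALmonodromy}, the canonical framing $\iota$ of $\Sigma_1$ is (up to isotopy) the standard inclusion $S^n \hookrightarrow \R^{n+1} = \mathrm{Fix}(c_{\C^{n+1}}) \cap (\text{fiber})$, so $c$ fixes $\Sigma_1$ setwise and $\iota^{-1} c \iota \colon S^n \to S^n$ is the identity on the fixed-point set $S^n$; a short computation with the conjugation-linear differential on the cotangent directions shows $\iota^{-1} c \iota$ acts as $-1$ on each cotangent fiber, i.e. it is the restriction to the zero section of an honest isometric involution of $T^*S^n$. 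Since an isometry of $S^n$ fixing the whole sphere pointwise is the identity, we get $\iota^{-1} c \iota = \mathrm{id}$ in the local model. (If one instead uses a framing differing by an orientation-reversing rotation of $\R^{n+1}$, one lands on the reflection $r$; this is the source of the ``$\mathrm{id}$ or $r$'' dichotomy, matching Remark \ref{Ioutline_of_proof:rmk:setting}(3).)

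Then I would globalize. For a general real Lefschetz fibration $\pi \colon E \to \mathbb{D}^2$ with real structure $c_E$, pick the unique critical point $p_0$ and a real regular value, say $1$; by the normal form for ordinary double points one can choose $J_E$-holomorphic coordinates near $p_0$ identifying $\pi$ with $Q$, and — this is the point requiring care — because $c_E$ is an anti-holomorphic involution covering $c_\C$ and fixing $p_0$, one can arrange (after averaging / applying the equivariant Darboux-type normal form for real ordinary double points) that in these coordinates $c_E$ becomes exactly $z \mapsto \bar z$. With such equivariant coordinates in hand, the vanishing sphere $S \subset E_1$ and its canonical framing $\iota$ are, by construction, the images of the local-model data $(\Sigma_1, \mathrm{std})$, so Lemma \ref{lem:lefschetz} reduces to the local statement already proved: $c(S) = S$ and $\iota^{-1} c \iota \simeq \mathrm{id}$, or $\simeq r$ if the framing is chosen in the other orientation class.

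The main obstacle is precisely the equivariant normal form in the third step: showing that the holomorphic Morse coordinates around the real critical point can be chosen to conjugate $c_E$ to standard complex conjugation, compatibly with the symplectic structure up to the allowed Hamiltonian isotopy. This is where Salepci's smooth result must be upgraded to the symplectic setting. I would handle it by first getting smooth real Morse coordinates à la Salepci, then correcting them to be $J_E$-holomorphic and $c_E$-equivariant using that the space of compatible choices is contractible and that $c_E$ acts on it, so one can average; the anti-symplectic correction is absorbed into the $1$-form $\gamma$ exactly as in the construction of $E^0_\epsilon$. The remaining steps are routine once this normal form is in place.
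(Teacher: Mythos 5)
Your proposal takes a genuinely different route from the paper, and the route has a gap at exactly the step you flag as the ``main obstacle''. You claim that holomorphic Morse coordinates near the real critical point can always be chosen so that $c_E$ becomes $z \mapsto \bar z$. This is false: the local model carries (at least) two inequivalent real structures compatible with $Q$, namely $c_1(z)=(\overline{z_1},\dots,\overline{z_{n+1}})$ and $c_2(z)=(-\overline{z_1},\overline{z_2},\dots,\overline{z_{n+1}})$, both of which appear explicitly in the paper's proof of Proposition~\ref{prop:Lefschetz}. They are not conjugate by any fibered holomorphic symplectomorphism, since $c_1|_{S^n}=\mathrm{id}$ while $c_2|_{S^n}=r$ on the vanishing cycle, and these lie in different components of $\Diff(S^n)$. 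Your parenthetical remark that the dichotomy can be absorbed into the framing is also incorrect: replacing $\iota$ by $\iota\circ g$ with $g\in O(n+1)$ replaces $\iota^*c$ by $g^{-1}(\iota^*c)g$, which stays in the same isotopy class, so no framing choice converts $r$ into $\mathrm{id}$. Since the ``$\bar z$ only'' normal form you rely on does not exist in general, the reduction in your third step does not go through.

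The paper avoids any normalization of $c_E$ and is much more elementary. For $c(S)=S$ it only uses that $c_E(0)=0$ and that $c_E$ commutes with parallel transport, so $c_E$ preserves the vanishing cycle. For the isotopy class, working in the local model, it observes that the same compatibility with parallel transport forces $c_E$ on the thimble $\Sigma=B^{n+1}(0)\subset\R^{n+1}$ to be the scaling extension
\[
c_E(x)=c\!\left(\frac{x}{\norm{x}}\right)\norm{x},
\]
of $c|_{S^n}$. Being $1$-homogeneous and smooth at the origin, $c_E|_{\R^{n+1}}$ must be linear, hence orthogonal since it preserves the sphere; therefore $c|_{S^n}$ is an isometry and lies in the isotopy class of $\mathrm{id}$ or of $r$ according to its determinant. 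This gives both conclusions without any equivariant normal form and without upgrading Salepci's result; if you want to salvage your write-up, replace the normal-form step with this parallel-transport/scaling argument.
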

\begin{proof}
    $c(S)=S$ follows from $c_E(0)=0$ and the fact that $c_E$ commutes with parallel transport.
    For the second part, note that it is enough to consider the model
    $Q \colon \C^{n+1} \rightarrow \C$.
    In that case, $S=S^{n}\subset \C^{n+1}$ is a standard sphere.
    Note that $c_E$ restricted to the thimble
    $\Sigma= B^{n+1}(0)$ is a smooth extension of the sphere $c\vert_{S^n}$ to the ball.
    Moreover, since parallel transport commutes with $c_E$, it is a linear extension, in the sense that
    \[
        c_E(x)=c\left(\frac{x}{\norm{x}}\right)\norm{x} .
    \]
    It follows that $c_E\vert_{\R^{n+1}}$ is an orthogonal linear transformation and hence
    $c\vert_{S^n}$ is an isometry. In particular, $c\vert_{S^n}$ is smoothly isotopic to $\mathrm{id}$
    or $r$.    
\end{proof}

Proposition \ref{prop:Lefschetz} states that the condition on $\iota^*c$ is equivalent to $M$ being the fiber of a real Lefschetz fibration. One direction is proven in Lemma \ref{lem:lefschetz} above. We now prove the other direction.
\begin{proof}[Proof of Proposition \ref{prop:Lefschetz}.]
Suppose the tuple $(M,S,\iota,c)$ satisfy the conclusion of Lemma \ref{lem:lefschetz}. We want to construct a real Lefschetz fibration whose fiber is $M$, whose vanishing sphere is $(S,\iota)$ and whose real structure restricts to a real structure Hamiltonian isotopic to $c$. 

First we endow the Lefschetz fibration $\pi^0_{\epsilon} \colon E^0_{\epsilon} \to \mathbb{D}^2$ from the previous section with a real structure.
We consider two options:
\[
    c_1(z_1,\dots,z_{n+1}) = (\overline{z_1}, \dots,\overline{z_{n+1}})
\]
and 
\[
     c_2(z_1,z_2,\dots,z_{n+1}) = (-\overline{z_1},\overline{z_2},\dots, \overline{z_{n+1}}).
\]
These are real structures on $E^0_{\epsilon}$.

By Proposition \ref{INVlinear:prop:main} there exists a Hamiltonian isotopy $\psi_t$ on $M$
supported in $V$
such that in the model $T^*_{\delta}S^n$ ($\delta<\epsilon$ small enough) one has 
\[
    \psi_1 c (q,p) = (q,-p)
\]
if $\varphi^*c\simeq \mathrm{id}$
and
\[
    \psi_1 c (q,p) = (r(q),-r(p))
\]
if $\varphi^*c \simeq r$.
These two maps exactly correspond to $c_1$ and $c_2$ on the fiber $(\pi^0_{\epsilon})^{-1}(1)$. 

We now glue the fibration $\pi \colon E^0 \to \mathbb{D}^2$ from two parts: the trivial fibration 
$$\mathbb{D}^2 \times (M \backslash \varphi^{-1}(T_{\delta}^*S^n))$$ and the local model fibration $E_{\delta}^0$.
On the first part, we define
$c(z, x) := (\overline{z},\psi_1c(x))$. On $E_{\delta}^0$ we define $c(z):= c_1(z)$ or $c(z)=c_2(z)$.
These definitions are compatible on the glued region and hence descend to a real structure $c_{E^0}$ on $E^0$ satisfying $c_E\vert_{E_1}=\psi_1c$.
\end{proof}
\subsection{Splitting of the monodromy into anti-symplectic involutions.}\label{subsec:REALsplitting}
Let $$\pi \colon E \to \mathbb{D}^2$$ be a real Lefschetz fibration with real structure $c_E \colon E \to E$ as above. 
We assume that $p\in E$ is the unique critical point of $\pi$ and $\pi(p)=0$.
Let $M:=E_1:=\pi^{-1}(\{1\})$ and denote
by $\tau\colon M \to M$ the monodromy along the boundary loop 
$\gamma(t) = e^{2\pi i t}, t\in [0,1].$
The following result is due to Salepci \cite{Salepci} in the smooth category.
Here we adapt it to the symplectic framework.
\begin{lem}\label{EXlem:splitting}
$\tau$ splits into a product of two anti-symplectic involutions on $M$.
More concretely, $\tau = c_+ \circ c_-$
for two anti-symplectic involutions $c_{\pm}\colon M \longrightarrow M$, where $c_+= (c_E)\vert _{E_1}$. 
Equivalently, we have $c\tau c = \tau^{-1}$.
\end{lem}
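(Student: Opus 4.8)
The plan is to exhibit the second involution $c_-$ directly as the composition $c_- := c_+^{-1} \circ \tau = c_+ \circ \tau$ (using $c_+^2 = \mathrm{id}$), and then to show that this map is genuinely an anti-symplectic \emph{involution}, i.e. that $(c_+\tau)^2 = \mathrm{id}$, equivalently $c_+ \tau c_+ = \tau^{-1}$. Since $c_+$ is anti-symplectic and $\tau$ is symplectic, the composite $c_+\tau$ is automatically anti-symplectic, so the only real content is the involutivity. Granting the identity $c\tau c = \tau^{-1}$ (with $c = c_+$), the statement $\tau = c_+ c_-$ with $c_-$ an anti-symplectic involution follows formally: set $c_- = c_+ \tau$, then $c_+ c_- = c_+^2 \tau = \tau$ and $c_-^2 = c_+\tau c_+ \tau = \tau^{-1}\tau = \mathrm{id}$.

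So the crux is proving $c_+ \tau c_+ = \tau^{-1}$ at the level of the monodromy. First I would recall that the monodromy $\tau$ is obtained by symplectic parallel transport of $M = E_1$ around the loop $\gamma(t) = e^{2\pi i t}$, and that parallel transport depends (up to Hamiltonian isotopy) only on the homotopy class of the path rel endpoints in $\mathbb{D}^2 \setminus \{0\}$. The key geometric input is the commuting square (\ref{REALdef:diag:conj}): $c_E$ covers complex conjugation $c_{\mathbb{C}}$ on the base and is anti-symplectic on the total space. An anti-symplectic map intertwines symplectic parallel transport along a path $\delta$ with symplectic parallel transport along the conjugate path $c_{\mathbb{C}} \circ \delta$, but \emph{with reversed orientation} — this sign reversal is exactly what turns $\tau$ into $\tau^{-1}$. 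Concretely: parallel transport along $\gamma$ conjugated by $c_E$ equals parallel transport along $t \mapsto \overline{e^{2\pi i t}} = e^{-2\pi i t}$, which is $\gamma$ traversed backwards, hence $\tau^{-1}$. Since $c_E$ restricts to $c_+$ on $E_1$ and $c_{\mathbb{C}}(1) = 1$, one reads off $c_+ \tau c_+^{-1} = \tau^{-1}$, i.e. $c\tau c = \tau^{-1}$.

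I would organize the argument in three steps: (1) state and verify the general principle that an anti-symplectic fibered involution conjugates parallel transport along $\delta$ into parallel transport along $\overline{\delta}$ reversed — this is a pointwise statement about the horizontal distributions $TE_p^{\Omega_E}$, using that $c_E^*\Omega_E = -\Omega_E$ sends the symplectic orthogonal of a fiber to the symplectic orthogonal of the image fiber, while $d\pi \circ dc_E = dc_{\mathbb{C}} \circ d\pi$ controls how horizontal lifts transform; (2) apply this with $\delta = \gamma$ the boundary loop, noting $\overline{\gamma}$ reversed is homotopic rel endpoints in $\mathbb{D}^2\setminus\{0\}$ to $\gamma^{-1}$, to get $c_+ \tau c_+^{-1} \simeq \tau^{-1}$; (3) upgrade the Hamiltonian-isotopy statement to the genuine identity $c\tau c = \tau^{-1}$ in the relevant (isotopy) category and then extract $c_-$ as above.

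The main obstacle I anticipate is Step (1): one has to be careful that "parallel transport along the conjugate path reversed" is really what conjugation by an \emph{anti}-symplectic map produces, and that no extra correction from the $1$-form $\gamma$ in $\Omega_E = \Omega_0 + d\gamma$ (near the boundary, in the local model) spoils the argument — here one uses that the fibration is trivial near $\partial\mathbb{D}^2$ and that $c_E$ respects this trivialization. A secondary subtlety is that the monodromy is only well-defined up to Hamiltonian isotopy and is not literally compactly supported near $S$, so the final identity $c\tau c = \tau^{-1}$ should be understood in the appropriate sense (symplectic, resp. Hamiltonian, isotopy), matching how $\tau_S$ was defined in Section \ref{subsec:REALmonodromy}; invoking Salepci's smooth result \cite{Salepci} for the underlying diffeomorphism statement and then checking anti-symplecticity is the cleanest route.
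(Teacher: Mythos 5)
Your approach is essentially the same as the paper's: both hinge on the observation that $dc_E$ preserves the horizontal distribution (the $\Omega_E$-orthogonal of the fibers), so that $c_E$ intertwines parallel transport along a path $\delta$ with parallel transport along $c_{\mathbb{C}}\circ\delta$; the paper applies this to the half-loop $\eta^+$ to produce $c_- = (P_{1;\frac12})^{-1} c_E P_{1;\frac12}$ explicitly, whereas you apply it to the full boundary loop to get $c_+\tau c_+ = \tau^{-1}$ and then set $c_- := c_+\tau$, and one checks (using $c_E^2=\mathrm{id}$) that these two descriptions of $c_-$ agree. One small conceptual clarification: the intertwining $c_E P_\delta c_E^{-1} = P_{c_{\mathbb{C}}\circ\delta}$ does not in itself reverse orientation and would hold just as well for a fibered symplectic involution covering $c_{\mathbb{C}}$ — the ``reversal'' enters solely because $c_{\mathbb{C}}\circ\gamma$ is $\gamma$ traversed backwards, while the anti-symplecticity of $c_E$ is what ensures the factors $c_\pm$ are anti-symplectic rather than symplectic.
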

\begin{proof}
$\Omega_E$ defines a symplectic connection on the smooth part of $E$.
Let us denote by 
\[
    P_{\gamma(s);t} \colon E_{\gamma(s)} \to E_{\gamma(s+t)}
\]
the parallel transport for time $t$ along $\gamma$. 
Let $v\in E_{-1}$. Consider the parallel lift $w(t)\in E_{e^{\pi i - \pi i t}}$
of $x:=c_E(v)$ along the upper half $\eta^+$ of $\gamma$. 
Note that 
$$c_E\circ (P_{1;\frac{1}{2}})^{-1} \circ c_E(v) =c_E(w(1)).$$
It is straight-forward to check that $v(t):= c_E(w(t))$ is actually a 
parallel lift of $v$ along the lower half $\eta^-$ of $\gamma$. This uses $(dc_E)(H_{w(t)}) = H_{c_E(w(t))}$.
Hence,
$$c_E \circ (P_{1;\frac{1}{2}})^{-1} \circ c_E = P_{-1,\frac{1}{2}}$$
and the lemma follows:
$$\tau = P_{-1;\frac{1}{2}} \circ P_{1;\frac{1}{2}} = 
c_E \circ (P_{1;\frac{1}{2}})^{-1} \circ c_E \circ P_{1;\frac{1}{2}}
= c_+ \circ c_-,$$
where $c_+ = (c_E)\vert_M$ and $c_- = (P_{1;\frac{1}{2}})^{-1} \circ c_E \circ P_{1;\frac{1}{2}}$.
\end{proof}

This proves Proposition \ref{R:prop}.
Alternatively, Proposition \ref{R:prop} can also be shown directly from the definition of a model Dehn twist without going through real Lefschetz fibrations.
\subsection{Examples in $2$ dimensions.}\label{subsec:REAL2Dexamples}
\begin{ex}[Genus $2$ surface]
Let us consider the genus $2$ surface $\Sigma_2$.
Take $S$ to be a separating curve, going once around between the two holes,
as in Figure \ref{fig:genus2}. Consider the Dehn twist $\tau_S$ around $S$.
\begin{figure}[ht]
\centering
\begin{tikzpicture}
\draw[smooth] (0,1) to[out=30,in=150] (2,1) to[out=-30,in=210] (3,1) to[out=30,in=150] (5,1) to[out=-30,in=30] (5,-1) to[out=210,in=-30] (3,-1) to[out=150,in=30] (2,-1) to[out=210,in=-30] (0,-1) to[out=150,in=-150] (0,1);
\draw[smooth] (0.4,0.1) .. controls (0.8,-0.25) and (1.2,-0.25) .. (1.6,0.1);
\draw[smooth] (0.5,0) .. controls (0.8,0.2) and (1.2,0.2) .. (1.5,0);
\draw[smooth] (3.4,0.1) .. controls (3.8,-0.25) and (4.2,-0.25) .. (4.6,0.1);
\draw[smooth] (3.5,0) .. controls (3.8,0.2) and (4.2,0.2) .. (4.5,0);
\draw[blue] (-0.5,0) arc(180:0:0.51 and 0.2);
\draw[blue, dashed] (-0.5,0) arc(180:0:0.51 and -0.2);
\filldraw[blue] (-0.5,0) circle (0pt) node[anchor=east]{$\alpha_1$};
\draw[red, thick] (2.5,-0.85) arc(270:90:0.3 and 0.85);
\draw[red,dashed,thick] (2.5,-0.85) arc(270:450:0.3 and 0.85);
\filldraw[red] (2.6,0) circle (0pt) node[anchor=east]{$S$};
\draw[blue] (5.5,0) arc(180:0:-0.51 and 0.2);
\draw[blue, dashed] (5.5,0) arc(180:0:-0.51 and -0.2);
\filldraw[blue] (5.5,0) circle (0pt) node[anchor=west]{$\alpha_2$};
\draw[green] (1,0) ellipse (1 and 0.5);
\filldraw[green] (1,1) circle (0pt) node[anchor=north]{$\beta_1$};
\draw[green] (4,0) ellipse (1 and 0.5);
\filldraw[green] (4,1) circle (0pt) node[anchor=north]{$\beta_2$};
\end{tikzpicture}
\captionsetup{justification=centering}
    \caption{Genus $2$ surface with Lagrangian sphere $S$.}
    \label{fig:genus2}
\end{figure}
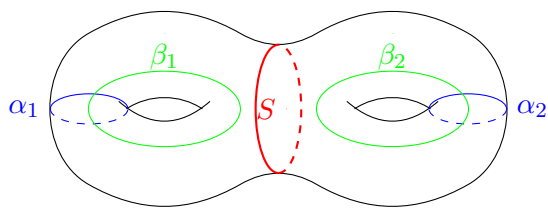

As in \cite{seidel96} we can work over $\Z_2$ instead of the Novikov field,
and the Floer cohomology groups are $\Z$-graded.

$\tau_S$ splits into the product of two anti-symplectic involutions:
Take $c$ to be the anti-symplectic involution which is a reflection along $S$.
It is straight forward to check that $\tilde{c}:=c \circ \tau_S$ is an anti-symplectic involution.
\noindent In particular, we can write $\tau_S = c \circ \tilde{c}$.

Let us compute $c_*\colon HF^*(\tau^{-1}_S) \to HF^*(\tau^{-1}_S)$.
By the isomorphism (\ref{Iexamples:thm:pedrotti}) Floer cohomology of $\tau^{-1}_S$ is
\begin{align*}
 HF^*(\tau^{-1}_S) &\cong H^*(\Sigma \backslash S; \Z_2)\\
    &\cong  H^*(\Sigma \backslash S;\Z_2) \\
    &\cong H^*(S^1 \vee S^1;\Z_2)
     \oplus H^*(S^1 \vee S^1;\Z_2)\\
     &\cong \Z_2 [pt_1] \oplus \Z_2 \alpha_1 \oplus \Z_2 \beta_1
     \oplus \Z_2 [pt_2] \oplus \Z_2 \alpha_2 \oplus \Z_2 \beta_2.
\end{align*}

In degree 
$0$, the matrix representing $c^*$ on $H^0(\Sigma \backslash S; \Z_2)$ with respect to the basis $[pt_1], [pt_2]$
is
\begin{align*}
\begin{pmatrix}
 0 & 1 \\
1 & 0 
\end{pmatrix}.
\end{align*}
It follows from Theorem \ref{I:thm:main} that $A = [pt_1] + [pt_2]$.
\end{ex}

\begin{ex}[higher genus surfaces] 
Similarly, we can consider any surface $\Sigma$ of genus $g\geq 2$, $S$ a separating circle in it that is the fixed point set of a reflection. Then $HF^0(\tau_S^{-1})
\cong \Z_2 \oplus \Z_2$, where each of the two summands corresponds to one of the connected components of $\Sigma \backslash S$. Theorem \ref{I:thm:main} implies $A= (1,1)$.
\end{ex}

\begin{ex}[Torus]
Let $S$ be any non-contractible embedded circle in the torus $T^2$.
Using the long exact sequence (\ref{I:eqLES2}) applied to $K = \Delta$ one computes
\[
    HF^0(\tau_S^{-1}) \cong H^{even}(T^2; \Lambda) / H^0(S; \Lambda) \cong H^2(T^2;\Lambda) \cong \Lambda.
\]
For any anti-symplectic involution $c\colon T^2 \to T^2$ satisfying
$c(S)=S$, it follows that $c_* = \mathrm{id}$.

\end{ex}

\section{Floer cohomology}\label{sec:HF}
In this section we collect the main properties of Floer cohomology we need in the sequel.
\subsection{Setting.}\label{subsec:HFsetting}
We assume that $M$ is symplectically aspherical, that is 
for every smooth map $u \colon S^2 \to M$, its symplectic area vanishes:
\[
    \int_{S^2}u^*\omega = 0.
\]
Moreover, we assume that all involved Lagrangian submanifolds are relatively symplectically aspherical, that is for every smooth map $u\colon D^2 \to M$ satisfying $u(\partial D^2) \subset L$, we have
\[
    \int_{D^2} u^*\omega =0.
\]
In particular, $S\subset M$ is relatively symplectically aspherical. 
This is automatic if $M$ is symplectically aspherical, unless $S$ has dimension $1$. In the latter case, the condition is equivalent to $S$ being a non-contractible circle. 
In this situation, Floer cohomology $\HF^*(f)$ for a symplectomorphism $f \in \Symp(M)$, and Lagrangian Floer cohomology $\HF^*(L,K)$ for Lagrangians $L,K$ as above can be defined over the universal Novikov field 
\[
\Lambda = \left\{ \sum a_k q^{\omega_k} \bigg \vert \vert a_k\in \Z_2, \omega_k \in \R,
\lim\limits_{k\to \infty}\omega_k = \infty
\right \}.
\]
$\HF^*(f)$ and $\HF^*(L,K)$ are $\Z_2$-graded, whenever $L$ and $K$ are oriented.
We include a section about the definition of these groups for convenience of the reader in section
\ref{sec:ADD}. For a more detailed exposition, we refer the reader to \cite{DostSal, seidelthesis97, lee}
for $\HF^*(f)$ and to \cite{floer, oh1, oh_add} for $\HF^*(L,K)$ .

\subsection{Conjugation invariance.}\label{subsec:HFconjugation_invariance}
Let $f$ be a symplectomorphism on $X$ and $\varphi$ be an antisymplectic diffeomorphism on $X$.
We will make substancial use of the following fact, which is an anti-symplectic version of the well-known conjugation invariance of Floer cohomology (see e.g. \cite[section 3]{seidel_lectures}).
We include a proof in section \ref{subsec:AdddefHF}.
\begin{prop}\label{HFconjugation_invariance:thm:main}
There is a canonical graded isomorphism
$$(\varphi f^{-1})_*\colon \HF^*(f^{-1}) \to \HF^*(\varphi f \varphi^{-1}).$$
\end{prop}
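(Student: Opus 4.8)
The plan is to mimic the standard proof of conjugation invariance of Hamiltonian Floer homology (as in Seidel's lectures), but carefully track how an anti-symplectic diffeomorphism $\varphi$ acts on the various objects. Recall that $\HF^*(f^{-1})$ is the Floer cohomology of the fixed points of $f^{-1}$, computed from a chain complex generated by fixed points $x$ of $f^{-1}$ (equivalently, by sections of the mapping torus, or by $1$-periodic orbits of a Hamiltonian isotopy realizing $f^{-1}$), with differential counting perturbed holomorphic cylinders $u\colon \R\times S^1 \to M$ (or holomorphic sections of the mapping torus) satisfying an equation of the form $\partial_s u + J(\partial_t u - X_t(u)) = 0$ with respect to an $\omega$-compatible almost complex structure $J$.

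First I would fix the auxiliary data: a Hamiltonian isotopy $\phi^H_t$ with $\phi^H_1 = f^{-1}$ and a generic time-dependent $\omega$-compatible almost complex structure $J_t$, so that $\HF^*(f^{-1})$ is computed by $(\CF^*(H,J), \partial_{H,J})$. Then I would \emph{push forward} this data by $\varphi$. Since $\varphi$ is anti-symplectic, $\varphi^* \omega = -\omega$, so $\varphi$ conjugates $\omega$-compatible almost complex structures to $(-\omega)$-compatible ones; to repair this one also reverses the time (and/or $s$) coordinate on the cylinder, exactly as in the standard proof that $\HF^*(f)\cong\HF^*(f^{-1})$ via $u(s,t)\mapsto u(-s,-t)$. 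Concretely I expect the right assignment to send a generator/orbit $x(t)$ of $H$ for $f^{-1}$ to the orbit $\varphi(x(-t))$ of a conjugated-and-time-reversed Hamiltonian $H'$ generating $\varphi f \varphi^{-1}$; one checks directly that $\varphi \circ \phi^H_{-t} \circ \varphi^{-1}$ is a Hamiltonian isotopy (Hamiltonian because $\varphi$ is a diffeomorphism and the time-reversal kills the sign coming from anti-symplecticity) from $\mathrm{id}$ at $t=0$ to $\varphi f \varphi^{-1}$ at $t=1$, and that its orbits are the images of the orbits of $H$. Similarly define $J'_t := \varphi_* J_{-t}$ with an orientation reversal on the domain; the reversal compensates the sign so that $J'_t$ is again $\omega$-compatible. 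The core computation is then that $u\mapsto \varphi\circ u \circ (\text{reversal})$ is a bijection between the Floer trajectories for $(H,J)$ and those for $(H',J')$, preserving broken trajectories, gluing, and — because it is induced by a diffeomorphism on a $\Z_2$-graded theory — the gradings and $\Z_2$-coefficients; hence it induces a chain isomorphism and so an isomorphism $(\varphi f^{-1})_*\colon \HF^*(f^{-1})\to \HF^*(\varphi f\varphi^{-1})$. Canonicity (independence of the auxiliary choices up to the canonical continuation isomorphisms) follows because $\varphi$ intertwines continuation maps with continuation maps, an extra but routine diagram chase.

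The main obstacle I anticipate is \emph{bookkeeping of orientations/signs and time-reversals} rather than any deep difficulty: one must be scrupulous that the anti-symplectic sign is absorbed by exactly one reversal (of the $S^1$-coordinate, say) and not double-counted, that the resulting $J'$ is genuinely $\omega$-compatible (not $(-\omega)$-compatible), that the action functional behaves correctly so that energy is preserved and compactness carries over, and that the cohomological (rather than homological) variance — note the statement relates $\HF^*(f^{-1})$ to $\HF^*(\varphi f \varphi^{-1})$, with the inverse appearing on the source — is the one that actually falls out of the construction. Because the theory here is $\Z_2$-graded with $\Z_2$-coefficients, I expect no subtlety with signs of the differential, which removes the most painful part of the usual argument; the grading is simply transported by the diffeomorphism. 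I would also remark that the same argument, applied with $\varphi$ a symplectomorphism and without any time reversal, recovers the classical conjugation invariance, so the proof is genuinely a mild variant, and I would present it as such, deferring the detailed verification to section \ref{subsec:AdddefHF} as the authors announce.
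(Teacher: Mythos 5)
Your proposal is correct and follows essentially the same route as the paper: choose Floer data $(H_s,J_s)$ for $f^{-1}$, push it forward by $\varphi$ with a time reversal (the paper uses $s\mapsto 1-s$, i.e.\ $K_s := H_{1-s}\circ\varphi^{-1}$ and $J'_s := -(\varphi^{-1})^*J_{1-s}$, which is the same device as your $t\mapsto -t$) to absorb the anti-symplectic sign, observe that $x\mapsto\varphi f^{-1}(x)$ gives a degree-preserving chain isomorphism via the corresponding bijection of Floer trajectories, and then compose with continuation maps for canonicity.
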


If $\tau_S = c \circ \tilde{c}$ we can apply this result to $\varphi = \tilde{c}$ and $f= \tau_S$.
We get an automorphism
\begin{align*}
c_* \colon \HF^*(\tau_S^{-1}) \longrightarrow \HF^*(\tilde{c}\tau_S \tilde{c}) \cong \HF^*(\tau_S^{-1}).
\end{align*}
This is induced by the chain-level map sending a generator $x$ to $\tilde{c}\tau_S^{-1}(x) = c(x)$, concatenated with a continuation map.

\subsection{Lagrangian Floer cohomology.}\label{subsec:HFlag}
Note that for any symplectomorphism $f$ on a symplectically aspherical symplectic manifold $M$, the graph $\Gamma_{f}$ is a relatively aspherical Lagrangian manifold in $M\times M^{-}$. Also, products of relatively aspherical Lagrangians in $M$ are relatively aspherical Lagrangians in $M\times M^-$.

We endow the graph $\Gamma_{f}$ with the following orientation:
Given a positive basis $v_1, \dots , v_{2n}$ of $T_xM$, then the basis
$(v_1,Df_x(v_1)), \dots , (v_{2n}, Df_x(v_{2n}))$ of $T_x\Delta \subset T_xM \oplus T_xM$ is defined to be positive
if $(-1)^{\frac{n(n-1)}{2}}=1$ and negative otherwise, see \cite{WW}.
Moreover, given an oriented Lagrangian $N$, note that $f(N)$ has a canonical orientation.

Let $Q$ and $N$ be oriented Lagrangians in $M$. There are the following canonical graded isomorphisms between
Floer cohomology groups for Lagrangians in $M\times M^{-}$ and Lagrangians in $M$:
\begin{enumerate}
    \item $\HF^*(Q\times N, \Gamma_{f^{-1}}) \cong \HF^*(Q, f(N))$
    \item $\HF^*(Q\times N, Q'\times N') \cong \HF^*(Q,Q') \otimes \HF^*(N',N)$
\end{enumerate}

\subsection{Floer cohomology as a special case of Lagrangian Floer cohomology.}\label{subsec:HFcompare}
Floer cohomology of a symplectomorphism $f$ can be viewed as
Lagrangian Floer cohomology of the pair $(\Delta, \Gamma_{f})$.
This isomorphism is well-known, see for instance \cite{WW}, \cite{MakWu} and \cite[section 2.7]{leclercq-zapolsky}.
Namely we have
\begin{prop}\label{HFcompare:prop}
There is a canonical graded isomorphism $\Psi_{f} \colon \HF(f) \to \HF(\Delta, \Gamma_{f})$.
\end{prop}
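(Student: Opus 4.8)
The plan is to prove the statement on the chain level: I would construct an explicit isomorphism of Floer complexes by \emph{unfolding} Floer strips for the pair $(\Delta,\Gamma_f)$ in $M\times M^-$ into twisted Floer cylinders for $f$.

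First I would fix compatible auxiliary data on the two sides. For $\HF^*(f)$ one uses a Hamiltonian perturbation and a path $(J_t)_{t\in[0,1]}$ of $\omega$-compatible almost complex structures satisfying the usual twisting relations between $t$ and $t+1$ via $f$; its generators are the twisted orbits $\gamma(t+1)=f(\gamma(t))$, and for the trivial datum these are exactly the points of $\mathrm{Fix}(f)$. For $\HF^*(\Delta,\Gamma_f)$ in $(M\times M^-,\omega\oplus-\omega)$ I would use a \emph{folded} Floer datum: an almost complex structure on the strip $\R\times[0,1]$ that restricts to $(J_t)$ on the first factor and to the reflected datum on the $M^-$-factor, together with the correspondingly folded Hamiltonian. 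With such a choice the intersection points $\Delta\cap\Gamma_f$ are precisely the pairs $(x,x)$ with $f(x)=x$, and more generally the perturbed chords from $\Delta$ to $\Gamma_f$ are in canonical bijection with the twisted orbits, giving the chain-level map $\Psi_f$ on generators.

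Second I would set up the bijection of moduli spaces. Given a Floer strip $u=(u_1,u_2)\colon\R\times[0,1]\to M\times M^-$ with $u(\cdot,0)\in\Delta$ and $u(\cdot,1)\in\Gamma_f$, the first boundary condition reads $u_1(s,0)=u_2(s,0)$ and the second reads $u_2(s,1)=f(u_1(s,1))$. Concatenating $u_1$ on $t\in[0,1]$ with a reflected, $f$-corrected copy of $u_2$ on $t\in[1,2]$ produces, after rescaling the interval, a solution of Floer's equation for $f$ obeying the twisted periodicity condition: the reflection of the $t$-coordinate is exactly what turns the change of sign of $\omega$ and of $J$ on the $M^-$-factor into an honest $J$-holomorphic piece, and the matchings at $t=1$ and at the glued ends $t=0\sim2$ are smooth precisely because the boundary conditions there are the diagonal and the graph of $f$. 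This construction manifestly preserves energy, the $\R$-translation action and the asymptotics, is invertible, and carries regular data to regular data; hence it identifies the differentials.

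Third I would check that $\Psi_f$ respects the $\Z_2$-grading and is therefore a chain isomorphism. This reduces to a linear-algebra comparison between the sign $\det(\mathrm{Id}-Df_x)$, which determines the degree of $x$ in $\CF^*(f)$, and the local intersection sign of $(x,x)$ in $M\times M^-$ computed with the orientation convention for $\Gamma_f$ fixed in Section~\ref{subsec:HFlag}; that convention, with its factor $(-1)^{n(n-1)/2}$, is chosen exactly so that the two agree, and the computation is the one already indicated for item~(1) of that section. Passing to cohomology and composing with continuation maps shows that the resulting isomorphism does not depend on the chosen data, which is the asserted canonicity. The main obstacle is purely one of bookkeeping: verifying that the folded Floer datum is admissible and regular (so that both sides are genuinely computed by it) and, more delicately, propagating the orientation and grading conventions through the folding so that the index identity holds on the nose; since the statement is standard I would carry these out only briefly, referring to \cite{WW} and \cite[Section~2.7]{leclercq-zapolsky}.
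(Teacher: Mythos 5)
Your proposal takes essentially the same route as the paper: the paper constructs exactly such a folded Floer datum (with $K_s(x,y)= -\tfrac{1}{2} H_{\frac{1-s}{2}}(x) - \tfrac{1}{2}H_{\frac{s+1}{2}}(y)$ and $\tilde{J}_s = J_{\frac{1-s}{2}} \oplus (-J_{\frac{s+1}{2}})$), maps generators $x\mapsto (x,\phi(x))$, and exhibits the unfolding bijection of moduli spaces you describe via the explicit formula $u(s,t)=v_1(1-2s,-2t)$ on $s\in[0,\tfrac12]$ and $u(s,t)=v_2(2s-1,-2t)$ on $s\in[\tfrac12,1]$. The grading check is likewise the determinant comparison against the $(-1)^{n(n-1)/2}$ orientation convention that you indicate.
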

\noindent
For the convenience of the reader we include a sketch of the proof in section \ref{sec:ADD}.

Let $\varphi \colon M \to M$ be an anti-symplectic involution.
Consider the symplectomorphism
\begin{align*}
     \Phi^{\varphi} \colon M \times M^- \longrightarrow M \times M^- \\
     (x,y) \longmapsto (\varphi(y), \varphi(x)).
\end{align*}
The map $(\varphi f^{-1})_*$ on $\HF^*(\tau_S^{-1})$ corresponds to $\Phi^{\varphi}_*$
under the isomorphism of Proposition \ref{HFcompare:prop}, i.e. the following diagram commutes:
\begin{align*}
    \xymatrix{
&\HF^*(f^{-1}) \ar[r]^{\left(\varphi f^{-1}\right)_*} \ar[d]^{\Psi_{f^{-1}}} &\HF^*(\varphi f \varphi^{-1}) \ar[d]^{\Psi_{\varphi f \varphi ^{-1}}}\\
&\HF^*(\Delta, \Gamma_{f^{-1}}) \ar[r]^{\Phi^{\varphi}_*} &\HF^*(\Delta, \Gamma_{\varphi f \varphi ^{-1}})
}
\end{align*}
As a special case, we recover the commutative diagram (\ref{Ioutline:diag:phi_c}) by setting $f = \tau_S$ and $\varphi = \tilde{c}$.

\section{Lagrangian cobordisms}\label{sec:LC}
\subsection{Definition of a Lagrangian cobordism.}\label{subsec:LCdefinition}
In this section we recall the definition of Lagrangian cobordisms as studied by Biran and Cornea in the series of papers \cite{BC1, BC2, BC3}.
Let $(M,\omega)$ be a symplectic manifold.
Consider the product symplectic manifold $(M \times \R^2, \omega \oplus \omega_{\mathrm{std}})$.
Here, $\omega_{\mathrm{std}})= \mathrm{d}x \wedge \mathrm{d}y$ denotes the standard symplectic form on $\R^2$. We denote by $\pi \colon M \times \R^2 \to \R^2$ the projection to the plane.
For subsets $V \subset M \times \R^2$ and $Z\subset \R^2$, we write $V\vert_Z := V \cap \pi^{-1}(Z)$ for the restriction of $V$ over $Z$. 
A Lagrangian submanifold $V\subset M\times \R^2$ is called a \textit{Lagrangian cobordism} if there exists $R>0$ such that
\begin{enumerate}
\item[(i)] $$V \vert_{(-\infty, -R] \times \mathbb{R}} = \bigcup_{j=1}^{k_-} L_j \times (-\infty,-R] \times \{ j\}$$
	for some closed Lagrangian submanifolds $L_1, \dots, L_{k_-} \subset M$,
\item[(ii)] $$V \vert_{[R, \infty) \times \mathbb{R}} = \bigcup _{j=1}^{k_+} L_j' \times [R,\infty) \times \{j\}$$
	for some closed Lagrangian submanifolds $L_1', \dots, L_{k_+}' \subset M$,
\item[(iii)] $V\vert_{[-R,R]\times \mathbb{R}} \subset \mathbb{R}^2 \times M$ is compact.
\end{enumerate}
$V$ is called a Lagrangian cobordism from the Lagrangian family $(L_j')_{j=1, \dots, k_+}$
to the Lagrangian family $\left(L_i\right)_{i=1, \dots, k_-}$, denoted by
$$(L_j')_{j=1, \dots, k_+} \rightsquigarrow \left(L_i\right)_{i=1, \dots, k_-}.$$

\subsection{Lagrangian cobordisms induce cone decompositions.}\label{subsec:LCconedecomposition}
We recall here how a cobordism gives rise to cone decompositions of its ends in $\mathcal{DF}uk(M)$.
Since we work with cohomology, rather than homology, we write here a cohomological reformulation of Theorem A from \cite{BC2}. 
\begin{thm}[Theorem A in \cite{BC2}]
Let $V$ be an oriented cobordism from $L$ to the family $(L_1[l-1], L_2[l-2] \dots , L_l)$.
Assume that all Lagrangians involved (including $V$) are uniformly monotone.
Then there exists a graded quasi-isomorphism
$$L \cong \mathrm{Cone}\left( \dots \mathrm{Cone}(\mathrm{Cone}(L_1 \to L_{2} ) \to L_3)\to \dots \to L_l\right)$$
in the derived Fukaya category $\mathcal{DF}uk(M).$
\end{thm}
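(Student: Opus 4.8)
The plan is to deduce this cohomological statement from \cite[Theorem A]{BC2} by dualising: Floer cohomology is the dual of Floer homology with the two Lagrangians interchanged, and under this identification an iterated cone $\mathrm{Cone}(\cdots\to L_l)$ in homology becomes the iterated cone displayed above in cohomology. Alternatively — and this is the route I would actually write out in detail — one reruns the Biran--Cornea argument with cohomological conventions. First I would normalise $V$: after a compactly supported Hamiltonian isotopy of $M\times\C$ one can assume that outside a large disc $V$ is a disjoint union of products $L_i\times\gamma_i$, with the $\gamma_i$ pairwise disjoint horizontal rays, the $l$ negative ends at heights $1,\dots,l$ and the single positive end carrying $L$ at some other fixed height. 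The monotonicity hypothesis ensures that all the Floer-theoretic objects below are defined and that no disc or sphere bubbling occurs, so everything stays within the framework of \cite{BC2}.

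The core of the argument is to probe $V$ by test Lagrangians of the form $N\times\gamma\subset M\times\C$, where $N$ ranges over objects of $\mathcal{F}uk(M)$ and $\gamma$ over embedded curves in $\C$ with horizontal ends avoiding the heights $1,\dots,l$ and the height of the $L$-end. Since $\pi\colon M\times\C\to\C$ is holomorphic, the open mapping principle confines the $\C$-projection of every Floer strip for $(N\times\gamma,V)$ to the region cut out by $\gamma$ and $\pi(V)$. A Gromov-compactness and energy computation then gives: for a curve $\gamma_{+}$ crossing only the positive end, $\HF(N\times\gamma_{+},V)\cong\HF(N,L)$; for a curve crossing only the $j$-th negative end, $\HF(N\times\gamma_{j},V)\cong\HF(N,L_{j})$ up to the grading shift $l-j$. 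Choosing instead one connected curve $\Gamma$ that enters at the height of the $L$-end and crosses all $l$ negative ends in order of increasing height, the complex $\CF(N\times\Gamma,V)$ acquires an action (equivalently, $\C$-coordinate) filtration whose successive quotients are the $\CF(N,L_{j})$ and whose connecting differentials count the holomorphic polygons on $V$ and $N\times\Gamma$ projecting onto the successive bends of $\Gamma$; reading these off exhibits $\CF(N\times\Gamma,V)$ as the iterated cone $\mathrm{Cone}(\cdots\mathrm{Cone}(\CF(N,L_1)\to\CF(N,L_2))\to\cdots\to\CF(N,L_l))$. On the other hand $\Gamma$ is Hamiltonian isotopic inside $\C$, through curves with fixed ends at infinity that sweep across the bounded part of $V$, to a curve crossing only the $L$-end, so a continuation argument yields $\CF(N\times\Gamma,V)\simeq\CF(N,L)$. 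Combining the two descriptions and checking naturality in $N$ — i.e. compatibility with the higher $A_\infty$-products involving $N$, again forced by holomorphicity of $\pi$ — produces a quasi-isomorphism of $A_\infty$-modules over $\mathcal{F}uk(M)$, which by the Yoneda embedding is exactly the asserted graded quasi-isomorphism $L\cong\mathrm{Cone}(\cdots\to L_l)$ in $\mathcal{DF}uk(M)$.

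The step I expect to be the main obstacle is the compactness and gluing analysis underpinning the two curve computations and the continuation map between them: one must prove that the only degenerations of the relevant Floer strips and polygons are the expected breakings along the ends (governed by $\HF(N,L_j)$ and $\HF(N,L)$) together with the turning contributions over the compact part of $V$, using monotonicity to exclude bubbling and the holomorphicity of $\pi$ to pin down the $\C$-projections. The only remaining bookkeeping, which I would handle last, is matching the grading shifts $[l-j]$ — these arise from the heights of the negative ends and the Maslov indices of the intersections of $\gamma$ with $\pi(V)$ — against the shifts in the stated iterated cone.
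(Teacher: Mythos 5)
The paper itself does not prove this statement: it is quoted verbatim (in a cohomological reformulation) as Theorem A of Biran--Cornea \cite{BC2}, and the surrounding text offers no argument, only the specialisation to a three-ended cobordism and the resulting long exact sequence. So there is no ``paper's own proof'' to compare against. What you have written is a condensed outline of the Biran--Cornea proof itself, and as a high-level summary it is faithful: probing $V$ with test objects $N\times\gamma$, constraining the $\C$-projections of Floer strips via the open mapping principle and an almost complex structure making $\pi$ holomorphic, filtering $\CF(N\times\Gamma,V)$ by position in $\C$ so the associated graded pieces are the $\CF(N,L_j)$, deforming $\Gamma$ by a Hamiltonian isotopy to reach the positive end and get $\CF(N,L)$, and finally packaging the result as a quasi-isomorphism of $A_\infty$-modules invoked through Yoneda — these are indeed the ingredients of \cite{BC1,BC2}.

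Two caveats worth recording. First, the dualisation route you float at the start (cohomology as the dual of homology with the Lagrangians swapped) is the shorter path and is essentially what the present paper's author has in mind by ``cohomological reformulation''; if you only need the statement as used here, that is a one-paragraph translation, whereas re-running the entire \cite{BC2} argument with cohomological conventions is substantial overkill. Second, the part of your outline that carries all the weight — showing that the only degenerations of the relevant strips and polygons are end-breakings plus the controlled turning contributions, constructing the module maps, and verifying $A_\infty$-naturality in $N$ — is exactly the content of several sections of \cite{BC2}, and you correctly flag it as the obstacle but do not supply it. For the purposes of this paper, the appropriate move is what the author actually does: cite \cite{BC2} and record only the cohomological translation and the specialisation to the three-ended case, rather than attempt a re-proof. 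Your attribution of the grading shift $[l-j]$ to ``Maslov indices of the intersections of $\gamma$ with $\pi(V)$'' is also not quite the right framing; the shift is a bookkeeping consequence of the heights of the ends in the cone-module construction, and is fixed once and for all by the conventions in \cite{BC2} rather than computed per-intersection.
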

Here, we denote by $L[k], k\in \Z$ the Lagrangian $L$ with the same orientation for even $k$, and with oppostite orientation for odd $k$.
(The theorem also holds in the context of $\Z$-gradings, see also \cite{MakWu}.)
 
A special case occurs when there are only three Lagrangians involved, namely $V$ has one right end, $L$, and two left ends, $L_1[1]$ and $L_2$.
Then we get 
\[
L \cong Cone(L_1 \xrightarrow{\varphi} L_2).
\]
As we explain further in the \hyperref[appendix]{appendix}, the morphism $\varphi$ is determined by a unique element $\alpha_V \in HF^0(L_1,L_2)$. In particular, for any Lagrangian $K$ we get a quasi-isomorphism of chain complexes
\[
 \CF^*(K, L) \cong Cone \left( \CF^*(K,L_1) \xrightarrow{\mu^2(\alpha_V,-)} \CF^*(K,L_2) \right).
\]
Note that $\alpha_V$ is independent of $K$.

The associate long exact sequence in cohomology is
\[
\dots \to \HF^{k-1}(K,L) \to \HF^{k}(K,L_1) \xrightarrow{\mu^2(\alpha_V,-)} \HF^k(K,L_2) \to \HF^k(K,L) \to \dots
\]

\section{Mak-Wu cobordism}\label{sec:MW}
We consider a symplectic manifold $(M,\omega)$ and a parametrized Lagrangian sphere $S\subset M$.
Mak-Wu \cite{MakWu} constructed a Lagrangian cobordism $V_{MW}$
with three ends: $S\times S, \Delta$ and $\Gamma_{\tau_S^{-1}}$.
In this section, we will recall the construction of this cobordism, which closely follows \cite{MakWu}.
\subsection{The graph of the Dehn twist.}\label{subsec:MWGraphDT}
Following the principle that surgeries provide cobordisms with three ends \cite[Section 6]{BC1}, the Mak-Wu cobordism also arises as the trace of a surgery. The first step therefore is to understand $\Gamma_{\tau_S^{-1}}$ as the result of a surgery between 
$S \times S \subset M\times M^-$ and the diagonal $\Delta \subset M\times M^-$
along the clean intersection $\Delta_S := (S\times S)\cap \Delta$.
The surgery construction takes place locally in a Weinstein neighbourhood of $S\times S$. We choose a very specific neighbourhood, so that we can later compare it to $\Gamma_{\tau_S^{-1}}$.
Namely, consider the symplectic embedding
\begin{align*}
\widetilde{\varphi} \colon V \times V &\longrightarrow T_\epsilon^*S^n \oplus T_\epsilon^*S^n \subset T^*(S^n \times S^n) \\ 
(x,y) &\longmapsto (\varphi(x), -\varphi(y))
\end{align*}
that identifies $S\times S$ with the zero-section in $T^*(S^n \times S^n)$. Note that 
$$\widetilde{\varphi}^{-1}(N_{\Delta_S}^*) = \Delta \cap (V\times V),$$
where 
\[
    N_{\Delta_S}^*:= \left \{ \alpha \in T^*(S^n \times S^n) \, \vert \,
    \forall v\in \Delta_S \colon \alpha(v) = 0 \right \}.
\]
We will define a surgery model in $T^*(S^n \times S^n)$ for surgery of the zero-section and $N_{\Delta_S}^*$ along their intersection $\Delta_S$.
Then we will glue the surgery model into $V\times V$ via $\tilde{\varphi}$. 
To define the surgery model, we need some auxiliary functions:
\begin{defi}\label{MWGraphDTadm}
A \textit{$\lambda$-admissible} function $\nu_{\lambda}\colon \mathbb{R}_{\geq 0}
\longrightarrow [0, \lambda]$ is a smooth function satisfying
\begin{align*}
\begin{cases}
	 \nu_{\lambda}(0) = \lambda,\\
	 \nu_{\lambda}^{-1} \text{has vanishing derivatives of all orders at } \lambda,\\
	 0 < \nu_{\lambda}(r) < \lambda \text{ and strictly decreasing}
	 									 &\text{for } 0< r < \epsilon,\\
	 \nu_{\lambda}(r)=0                   &\text{for } r\geq \epsilon.
\end{cases}
\end{align*}
\end{defi}

Let $\pi_2 \colon T^*(S^n \times S^n) \cong T^*S^n \oplus T^*S^n \to T^*S^n$ be the projection
to the second summand. Consider 
$\sigma_\pi \colon T^*(S^n \times S^n) \to \R$ defined by
$\sigma_\pi(\xi) = \norm{\pi_2(\xi)}$.
This has a well-defined Hamiltonian flow on $T^*(S^n \times S^n)\backslash \Delta_S$.
Let $\lambda < \pi$. Consider a $\lambda$-admissible function $\nu = \nu_{\lambda}$,
and define the following \textit{flow handle}:
\begin{align*}
H_{\nu} = \left\{ \psi_{\nu(\sigma_\pi(\xi))}^{\sigma_\pi}(\xi) \in T^*(S^n \times S^n) \, \big\vert \,
\xi \in N_{\Delta_S}^* \backslash \Delta_S, \sigma_\pi(\xi) \leq \epsilon\right\}.
\end{align*}
$H_\nu$ can be glued to a part of $S\times S$ and $N^*_{\Delta_S}$, resulting in a smooth Lagrangian in $T^*(S^n \times S^n)$ that coincides with $N_{\Delta_S}^*$ outside of $T_\epsilon^* S \oplus T_\epsilon^*S^n$. 
We denote the resulting Lagrangian by 
\[
(S^n \times S^n) \#_{\Delta_S}^{\nu} N_{\Delta S}^*.
\]
We finally glue this model surgery into $V\times V$:
$$(S\times S) \#_{\Delta_S}^{\nu} \Delta := (\tilde{\varphi})^{-1}
\left((S^n \times S^n) \#_{\Delta_S}^{\nu} N_{\Delta_S}^*\right)
\cup \left(\Delta \backslash (V\times V^-)\right).$$

Mak-Wu \cite[Lemma 3.4]{MakWu} show that all such surgeries are Hamiltonian isotopic for different choices of $\nu$. Moreover, the same construction works for $\nu= \nu_\epsilon^{\text{Dehn}}$ (even though this is \textit{not} admissible)
and the result is again Hamiltonian isotopic to any of the other surgeries.
It's straight-forward to see that
$$(S\times S) \#_{\Delta_S}^{\nu_\epsilon^{\text{Dehn}}} \Delta = \Gamma_{\tau_S^{-1}}$$
and so any of the above surgeries is Hamiltonian isotopic to $\Gamma_{\tau_S^{-1}}$.
In particular, since $\Gamma_{\tau_S^{-1}}$ is relatively symplectically aspherical, so is the surgery $(S\times S)\#_{\Delta_S}^{\nu} \Delta$.

\begin{rem}
This version of surgery is a special case of $E_2$-flow surgery, introduced in \cite[section 2.3]{MakWu} in more general situations.
\end{rem}
\subsection{The cobordism.}\label{subsec:MWcob}
$(S\times S) \#_{\Delta_S}^{\nu} \Delta$ is related to $S\times S$ and $\Delta$ via
a cobordism. 
This follows from a construction called "trace of a surgery", which is a surgery construction in one dimension higher. This was first introduced in \cite{BC1} for the case of a transverse surgery in a point.
As shown in \cite{MakWu}, exactly the same construction works for the $E_2$-surgery along clean intersections. We recall the construction in our special case.

Consider the symplectomorphism
\begin{align*}
\tilde{\varphi} \times \mathrm{id} \colon V \times V\times T^*\R &\longrightarrow T_\epsilon^*S^n \oplus T_\epsilon^*S^n \oplus T^*\R \subset T^*(S^n \times S^n \times \R) 
\end{align*}
and define the handle in the model $T^*(S^n \times S^n \times \R)$ 
as follows:
\[
\hat{H}_\nu = \left \{ \psi_{\nu(\sigma_{\hat{\pi}}(\xi))}^{\sigma_{\hat{\pi}}}(\xi) \in T^*(S^n \times S^n\times \R) \, \big \vert \,
\xi \in N_{\Delta_S \times \{0\}}^* \backslash (\Delta_S \times \{0\}), \sigma_{\hat{\pi}}(\xi) \leq \epsilon \right \},
\]
where $\sigma_{\hat{\pi}} \colon T^*(S^n \times S^n\times \R) \to \R$ is given by
$\sigma_{\hat{\pi}}(\xi_1, \xi_2, p) = \norm{((\xi_2,p)}$.
Here, $\nu= \nu_\lambda$ is a $\lambda$-admissible function, as defined in 
Definition \ref{MWGraphDTadm}.
One computes
\begin{align*}
\psi_t^{\sigma{\hat{\pi}}}(\xi_1, \xi_2,p) = 
\left( \xi_1, \psi^{\sigma}_{\frac{t \norm{ \xi }}{\sqrt{\norm{ \xi } ^2 + p^2}}} (\xi_2),
\psi^{\sigma^{\mathbb{R}}}_{\frac{t \lvert p \rvert}{\sqrt{\norm{ \xi } ^2 + p^2}}} (p) 
\right)
\end{align*}
So more concretely,
$\hat{H}_\nu$ can be described as follows:
\begin{align*}
\hat{H}_{\nu} = \left \{ \left(\xi, \psi^{\sigma}_{\nu\left(\sqrt{\norm{ \xi } ^2 + p^2}\right) \frac{\norm{ \xi}}{\sqrt{\norm{ \xi } ^2 + p^2}}} (\xi),
 \psi^{\sigma^{\mathbb{R}}}_{\nu\left(\sqrt{\norm{ \xi }^2 + p^2}\right) \frac{\norm{ p }}{\sqrt{\norm{\xi } ^2 + p^2}}} (p)\right) 
 \, \Big \vert \,
 \begin{array}{l}
      \xi \in T_{\epsilon}^*S, p \in \mathbb{R},\\
      \sqrt{\norm{ \xi } ^2 + p^2}< \epsilon
 \end{array}
  \right\}.
\end{align*}
Here, $\sigma\colon T^*S \longrightarrow \mathbb{R}$ is the Hamiltonian function
$\sigma(\xi) = \norm{ \xi }$ we used earlier to define $\tau_S$
and $\sigma^{\mathbb{R}}\colon T^*\mathbb{R} \longrightarrow \mathbb{R}$ is given by
$\sigma^{\mathbb{R}}(p) = \vert p \vert$.

The model handle $\hat{H}_\nu$ glues to a part of $(S^n\times S^n \times \R)\backslash \partial H$, which yields the model surgery trace
$$(S^n \times S^n \times \R) \#_{\Delta_S \times \{0\}} N_{\Delta_S \times \{0\}}^*.$$
Gluing this into $M\times M^- \times T^*\R$ via $\tilde{\varphi} \times \mathrm{id}$ we get
\[
V:=
(S\times S \times \R) \#_{\Delta_S \times \{0\}} 
\left( \Delta \times i\R \right):=
(\tilde{\varphi}\times \mathrm{id})^{-1}
\left((S^n \times S^n \times \R) \#_{\Delta_S \times \{0\}} N_{\Delta S \times \{0\}}^*\right). 
\]
$V \subset M\times M^- \times T^*\R$ is a Lagrangian submanifold. Under the identification $T^*\R \cong \C$ via $(q,p) \leftrightarrow q-ip$,
$V$ satisfies
\begin{align*}
&V\cap \pi_{\C}^{-1}(\epsilon) = S \times S \times \{\epsilon\},\\
&V\cap \pi_{\C}^{-1}(i\epsilon) = \Delta \times \{i \epsilon\},\\
&V\cap \pi_{\C}^{-1}(0) = (S \times S) \#_{\Delta_S}^{\nu} \Delta.
\end{align*}
By taking half of $V$, extending it by a ray of $(S \times S) \#_{\Delta_S}^{\nu} \Delta$ at $0$ and smoothing it, and bending the ends, as explained in \cite[Section 6.1]{BC1}, we get a 
cobordism 
$$\tilde{V}\colon (S \times S) \#_{\Delta_S}^{\nu} \Delta \rightsquigarrow 
(S\times S, \Delta) .$$

As discussed in section \ref{subsec:MWGraphDT}, $(S \times S) \#_{\Delta_S}^{\nu} \Delta$
is Hamiltonian isotopic to $\Gamma_{\tau_S^{-1}}$. Gluing a corresponing suspension to $\tilde{V}$ finally gives us the claimed cobordism 
\[
V_{MW}\colon \Gamma_{\tau_S^{-1}} \rightsquigarrow 
(S\times S, \Delta) .
\]

\subsection{Floer theory.}\label{subsec:MWsequence}
Mak-Wu \cite{MakWu} explain how to put gradings on $S\times S$, $\Delta$,
$\Gamma_{\tau_S^{-1}}$ and on $V_{MW}$ such that $V_{MW}$ becomes a graded cobordism from $\Gamma_{\tau_S^{-1}}$ to $(S\times S[1], \Delta)$.
Here, we only use $\Z/2$-gradings, but it follows from their proof, that $V_{MW}$ is an oriented cobordism.

In the situation of symplectically aspherical manifolds, $V_{MW}$
is a relatively symplectically aspherical Lagrangian in $M\times M^- \times \C$ with relatively symplectically aspherical ends.
More precisely, assuming $\omega \vert _ {\pi_2(M)} \equiv 0$
and $\omega \vert _ {\pi_2(M,S)} \equiv 0$ implies that 
$(\omega \oplus -\omega)\vert _{\pi_2(M\times M^-, S\times S)} \equiv 0$,
$(\omega \oplus -\omega)\vert _{\pi_2(M\times M^-, \Delta)} \equiv 0$,
$(\omega \oplus -\omega)\vert _{\pi_2(M\times M^-, \Gamma_{\tau_S^{-1}})} \equiv 0$ and
 $(\omega \oplus -\omega \oplus \omega_{\C}) \vert _ {\pi_2(M\times M^- \times \C, V_{MV})} \equiv 0$.
The latter follows from an argument very similar to the proof of the corresponding result on exactness and monotonicity in \cite[Lemma 6.2, 6.3]{MakWu}.

Floer theory for $V_{MW}$ and the ends is therefore well-defined.
The cone-decomposition result
\ref{subsec:LCconedecomposition}
from Biran-Cornea \cite{BC1}, \cite{BC2} therefore yields a long exact sequence of graded
Lagrangian Floer cohomology groups \cite[Theorem 6.4]{MakWu}:
\begin{align*}
\dots \to \HF^k(K,&S\times S) \xrightarrow{\mu^2(B,-)} \HF^{k}(K,\Delta) \\ &\xrightarrow{\mu^2(A,-)} \HF^{k}(K,\Gamma_{\tau^{-1}}) 
\xrightarrow{\mu^2(C,-)} \HF^{k+1}(K,S\times S) \to \dots
\end{align*}
for any admissible Lagrangian submanifold $K\subset \left(M\times M, \omega \oplus -\omega\right)$.
This is precisely the sequence (\ref{I:eqLES2}).
As indicated, the maps are given by $\mu^2$ operations with elements
$A \in \HF^0(\Delta, \Gamma_{\tau^{-1}})$,
$B \in \HF^0(S\times S, \Delta)$
and $C \in \HF^1(\Gamma_{\tau_S^{-1}},S\times S)$.
$A, B$ and $C$ are independent of $K$.

\begin{prop}
    If $2c_1(M) = 0$ in $\Homol^2(M; \Z)$ and $2c_1(M,S) = 0$
    in $\Homol^2(M,S)$ then $A\neq 0$.
    \footnote{The condition $2c_1(M,S) = 0$ is automatic for $n\geq 2$.
    For $n=1$ it's equivalent to $S$ being a non-contractible circle.}
\end{prop}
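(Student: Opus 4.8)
The plan is to show that $A \neq 0$ under the given Chern class hypotheses by identifying the piece of the long exact sequence (\ref{I:eqLES2}) that $A$ controls and arguing that it cannot vanish. Since the sequence is $\Z$-graded under the assumption $2c_1(M)=0$ and $2c_1(M,S)=0$, the cleanest approach is to compute with a well-chosen test object $K$. I would take $K = \Delta$ and use the identification $\HF^*(\Delta, \Gamma_{\tau_S^{-1}}) \cong \HF^*(\tau_S^{-1})$ from Proposition \ref{HFcompare:prop} together with $\HF^*(\Delta,\Delta) \cong \HF^*(\mathrm{id}) \cong \Homol^*(M;\Lambda)$ and $\HF^*(\Delta, S\times S) \cong \HF^*(S,S) \cong \Homol^*(S;\Lambda)$. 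With these identifications the long exact sequence reads, in part,
\[
\dots \to \Homol^{k}(S;\Lambda) \xrightarrow{\mu^2(B,-)} \Homol^{k}(M;\Lambda) \xrightarrow{\mu^2(A,-)} \HF^{k}(\tau_S^{-1}) \to \dots
\]
and the map $\mu^2(B,-)$ is (up to the standard identifications) the restriction map $\Homol^*(M;\Lambda) \to \Homol^*(S;\Lambda)$ dualized, i.e. the natural map induced by the inclusion $S \hookrightarrow M$ on cohomology. The key point is that this map is \emph{not} surjective: since $S \cong S^n$ with $n \geq 1$, $\Homol^n(S;\Lambda) \cong \Lambda$ is generated by the fundamental class (for $n\geq 2$, or the nonzero class for $n=1$), and the composition with restriction followed by the map to $\HF^*(\tau_S^{-1})$ forces $\mathrm{coker}(\mu^2(B,-)) \neq 0$; hence $\mu^2(A,-)$ has nonzero image, so $A \neq 0$.

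More precisely, I would argue as follows. First, establish (or cite from \cite{MakWu}, \cite{WW}) that under $K=\Delta$ the first map $\mu^2(B,-)$ corresponds to a map of $\Homol^*(M;\Lambda)$-modules which, in degree $n = \dim S$, cannot hit the top class of $\Homol^*(S^n;\Lambda)$ unless $[S]$ is nonzero in $\Homol_n(M;\Lambda)$ and the fundamental class pulls back — but even then, a dimension/degree count shows the class $\mathrm{PD}[\mathrm{pt}] \in \Homol^n(S^n;\Lambda)$ lifting to $\HF^*(\tau_S^{-1})$ survives. The cleanest self-contained version: exactness gives $\mathrm{im}(\mu^2(A,-)) \cong \mathrm{coker}(\mu^2(B,-))$, so it suffices to show $\mu^2(B,-) \colon \HF^*(\Delta,S\times S) \to \HF^*(\Delta,\Delta)$ is not surjective. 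Equivalently, dualizing, the map $\HF^*(\Delta,\Delta) \to \HF^*(S\times S, \Delta)$ appearing by shuffling ends is not injective; but this map is restriction $\Homol^*(M;\Lambda) \to \Homol^*(S;\Lambda)$, whose kernel contains the top class $\mathrm{PD}[\mathrm{pt}_M]$ whenever $\dim S < \dim M$, which holds since $S$ is a proper Lagrangian. So $A \neq 0$.

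The main obstacle is pinning down \emph{which} map in the triangle is which, with the correct orientations and the correct identification of $\mu^2(B,-)$ (and its "shuffled" adjoint) with the topological restriction/inclusion maps — the paper's conventions on graphs, orientations (Section \ref{subsec:HFlag}), and the isomorphisms $\HF^*(Q\times N, \Gamma_{f^{-1}}) \cong \HF^*(Q,f(N))$ must be tracked carefully so that the test object $K = \Delta$ genuinely produces the restriction map and not something degenerate. A secondary subtlety is the case $n=1$: here relative asphericity of $S$ (equivalently $S$ noncontractible) is exactly what makes $\Homol^1(S^1;\Lambda) \cong \Lambda$ nonzero and the restriction map non-surjective, which is why the hypothesis $2c_1(M,S)=0$ (automatic for $n\geq 2$) is invoked in the footnote. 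Once the identification of the maps with their topological counterparts is in place, the nonvanishing of $A$ is immediate from exactness and the elementary observation that $S^n \hookrightarrow M$ with $n < \dim M$ cannot induce a surjection on the relevant cohomology.
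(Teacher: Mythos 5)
Your proposal takes essentially the same route as the paper: set $K=\Delta$ in the cobordism exact sequence, identify $\HF^*(\Delta,S\times S)$ with $\Homol^*(S^n;\Lambda)$ and $\HF^*(\Delta,\Delta)$ with $\Homol^*(M;\Lambda)$, and conclude from the $\Z$-grading that $\mu^2(B,-)$ cannot be surjective, so $A\neq 0$. The paper phrases this as a contradiction (``if $A=0$ then $\HF^*(S,S)\cong\HF^*(\mathrm{id})$ as $\Z$-graded groups, impossible''), while you phrase it via $\mathrm{coker}(\mu^2(B,-))\neq 0$; these are the same degree count.

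One caveat: the extra identifications you invoke — that $\mu^2(B,-)$ ``is the restriction map dualized, i.e.\ the map induced by inclusion on cohomology,'' and later that the shuffled dual map is literally $\iota^*\colon \Homol^*(M;\Lambda)\to\Homol^*(S;\Lambda)$ — are neither needed nor clearly stated (you first describe the map as going from $\Homol^*(S)$ to $\Homol^*(M)$, then call it restriction, which goes the other way). The paper avoids identifying the middle map with anything topological: the argument works for \emph{any} $\Z$-graded map from $\Homol^*(S^n;\Lambda)$ (supported in degrees $n$ and $2n$, say) to $\Homol^*(M;\Lambda)$ (nonzero in degree $0$ and $2n$), since such a map cannot be surjective. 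Your ``cleanest self-contained version'' at the end in fact reduces to exactly this degree observation, so I would drop the discussion of $B$'s geometric meaning and keep only the grading argument, which is both shorter and avoids claims you have not verified against the paper's orientation and graph conventions.
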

\begin{proof}
Under the condition on the Chern class, everything becomes $\Z$-graded,
see \cite{seidel00}.
For $K=\Delta$, the sequence becomes
\begin{align*}
\dots \to \HF^{k}(S,S) \to \HF^k(\mathrm{id}) \xrightarrow{\Psi} \HF^k(\Delta, \Gamma_{\tau^{-1}})
\to \HF^{k+1}(S,S) \to \dots.
\end{align*}
Assume by contradiction that $A=0$. Then $\Psi=0$ and hence
we get $\Z$-graded isomorphisms
\begin{align*}
\Homol^*(S;\Lambda) \cong QH^*(S) \cong \HF^*(S,S) \cong \HF^*(\mathrm{id}) \cong \mathrm{QH}^*(M) \cong \Homol^*(M; \Lambda).
\end{align*}
This is a impossible.
We conclude that $A\neq 0$.
\end{proof}

\section{Symmetry of the Mak-Wu cobordism}\label{sec:INV}
We assume that $\iota^*c\simeq \mathrm{id}$ or $\iota^*c\simeq r$.
In this section, we prove Theorem \ref{Ioutline_of_proof:thm:invariance},
i.e. that $\Phi(V_{MW})$ is Hamiltonian isotopic to $V_{MW}$, where
$
\Phi(x,y,z)= (c(y),c(x),z).
$

\subsection{Linear approximation of anti-symplectic involution.}\label{subsec:INVlinear}
Any map $$c_0 \colon S^n \to S^n$$
induces an anti-symplectic involution
$$c_0^* \colon T^*S^n \to T^*S^n$$
via
$c_0^*(q,p) = (c_0(q), -p\circ (Dc_0)_{c_0(q)})$.
We assume that $c_0 = \mathrm{id}$ or $c_0=r$ (or more generally that $c_0$
is any isometry).
Note that we secretly identify $T^*S^n$ and $T_*S^n$ via the canonical isomorphism $\alpha \colon T_*S^n \to T^*S^n$ coming from the standard Riemannian metric.
The following diagram commutes:
\begin{equation*}
\xymatrix{
	&T^*S \ar[r]^{c_0^*}  &T^*S \\
	&TS \ar[r]^{-Dc_0} \ar[u]_{\alpha}^{\cong} & TS \ar[u]_{\alpha}^{\cong}
}
\end{equation*}
The following Lemma collects some properties of $c_0^*$.
\begin{lem}\label{INVlinear:lem:properties}
Assume that $c_0 = \mathrm{id}$ or $c_0=r$ (or more generally that $c_0$
is any isometry). Then
$c_0^* \colon T^*S \to T^*S$ satisfies
\begin{itemize}
\item $c_0^*(\xi) = - \phi_s^\sigma (-c_0^*(-\phi_s^\sigma (- \xi))) $
\item $\norm{ c_0^* ( - \phi_s^\sigma (-\xi)) } = \norm{ \xi }.$
\end{itemize}
\end{lem}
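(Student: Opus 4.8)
The plan is to derive both statements from one general fact: an anti-symplectic diffeomorphism that preserves a Hamiltonian reverses the induced Hamiltonian flow. Precisely, if $g$ is an anti-symplectic diffeomorphism and $H\circ g=H$, then $g_*X_H=-X_H$, hence $g\circ\phi^H_t=\phi^H_{-t}\circ g$. I would apply this twice, using only maps that are defined on and preserve $T^*S^n\setminus 0_{S^n}$, which is where $\phi^\sigma_s$ lives. First, to the fiberwise antipodal map $\iota(q,p)=(q,-p)$, which is anti-symplectic and satisfies $\sigma\circ\iota=\sigma$; this gives
\begin{align*}
-\phi^\sigma_s(-\xi)=\iota\,\phi^\sigma_s\,\iota(\xi)=\phi^\sigma_{-s}(\xi),\qquad \xi\in T^*S^n\setminus 0_{S^n}.
\end{align*}
Second, to $c_0^*$ itself, which is anti-symplectic (as noted just before the lemma) and satisfies $\sigma\circ c_0^*=\sigma$ precisely because $c_0$ is an isometry of the round metric defining $\norm{\cdot}$; this gives
\begin{align*}
c_0^*\circ\phi^\sigma_s=\phi^\sigma_{-s}\circ c_0^*.
\end{align*}

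The second assertion is then immediate from invariance of $\sigma$ under $c_0^*$ and under $\phi^\sigma_{t}$: by the first identity $-\phi^\sigma_s(-\xi)=\phi^\sigma_{-s}(\xi)$, so $\norm{c_0^*(-\phi^\sigma_s(-\xi))}=\norm{c_0^*(\phi^\sigma_{-s}(\xi))}=\norm{\phi^\sigma_{-s}(\xi)}=\norm{\xi}$.

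For the first assertion I would substitute the two identities into the right-hand side and collapse with the flow group law $\phi^\sigma_{-s}\circ\phi^\sigma_s=\mathrm{id}$:
\begin{align*}
-\phi^\sigma_s\bigl(-c_0^*(-\phi^\sigma_s(-\xi))\bigr)
&=-\phi^\sigma_s\bigl(-c_0^*(\phi^\sigma_{-s}(\xi))\bigr)
=\phi^\sigma_{-s}\bigl(c_0^*(\phi^\sigma_{-s}(\xi))\bigr)\\
&=\phi^\sigma_{-s}\bigl(\phi^\sigma_{s}(c_0^*(\xi))\bigr)=c_0^*(\xi),
\end{align*}
where the first equality applies $-\phi^\sigma_s(-\,\cdot\,)=\phi^\sigma_{-s}(\cdot)$ to the innermost argument, the second applies it again with $\eta:=c_0^*(\phi^\sigma_{-s}(\xi))$ in place of $\xi$, the third uses $c_0^*\circ\phi^\sigma_{-s}=\phi^\sigma_{s}\circ c_0^*$, and the last is the group law.

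I do not anticipate a genuine obstacle; the argument is entirely formal once the ``anti-symplectic conjugation reverses the flow'' principle is isolated, and the hypothesis $c_0=\mathrm{id}$, $r$ (or an isometry) enters only to guarantee $\sigma\circ c_0^*=\sigma$. The one point needing a line of care is checking that $\iota$ and $c_0^*$ really are anti-symplectic and preserve the punctured cotangent bundle $T^*S^n\setminus 0_{S^n}$: for $c_0^*$ this is the cotangent lift of $c_0$ post-composed with fiberwise negation, so it sends the zero section to itself and its complement to itself; and one must keep in mind that $\sigma$, hence $\phi^\sigma_s$, is smooth only away from $0_{S^n}$, which is why every composition above is to be read on $T^*S^n\setminus 0_{S^n}$.
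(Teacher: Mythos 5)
Your argument is correct, and it takes a genuinely different route from the paper. The paper's proof is concrete and geometric: it identifies $\phi_s^\sigma$ as the unit-speed geodesic flow (via $\phi_s^\sigma(-\xi)=\gamma'(s)$ for the geodesic $\gamma$ with $\gamma'(0)=-\xi$), uses the commutative diagram identifying $c_0^*$ with $-Dc_0$, and then exploits the fact that the isometry $c_0$ carries geodesics to geodesics, which yields $c_0^*(\gamma'(s))=-(c_0\circ\gamma)'(s)=-\phi_s^\sigma(-c_0^*(-\xi))$ directly. You instead isolate the abstract principle that an anti-symplectic map preserving a Hamiltonian conjugates its flow to the reverse flow, apply it to both the fiberwise antipodal map $\iota$ and to $c_0^*$, and collapse by the flow group law. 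Both proofs ultimately hinge on the same input (the isometry hypothesis), but in different guises: the paper uses it to map geodesics to geodesics, you use it to get $\sigma\circ c_0^*=\sigma$. Your version buys a cleaner formal structure that would carry over verbatim to any $c_0^*$-invariant Hamiltonian, whereas the paper's version makes the geometric content (geodesic flow) explicit, which is in the spirit of the explicit handle computations nearby. One small point worth keeping in your write-up: the paper records the anti-symplecticity of $c_0^*$ just before the lemma, so quoting it there is enough; but the $\sigma$-invariance $\sigma\circ c_0^*=\sigma$ is exactly where the isometry hypothesis enters, and it is cleanest to note, as you implicitly do, that it follows from the commutative diagram $c_0^*=\alpha\circ(-Dc_0)\circ\alpha^{-1}$ together with $Dc_0$ being norm-preserving.
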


\begin{proof}
Let $\xi\in T_x^*S\cong T_xS$. Let $\gamma$ be the unique geodesic in $S$ with
$\gamma(0)=x$ and $\gamma'(0)=-\xi$.
Then $\phi_s^{\sigma}(-\xi) = \gamma'(s)$.
Note that
$c_0^*(\gamma'(s)) = -dc(\gamma'(s)) =- (c\circ \gamma)'(s)$.
Moreover, 
$(c\circ \gamma)'(0) = dc(-\xi)= -c_0^*(-\xi)$, 
hence 
\begin{align*}
\phi_s^\sigma ( c_0^* \phi_s^{\sigma}(-\xi))
&= \phi_s^{\sigma}(-\phi_s^{\sigma}(- c_0^*(-\xi)))\\
&= c_0^*(-\xi).
\end{align*}
$c_0^*$ commutes with the minus sign because it is linear.
So the first claim follows.
For the second, note that both $\phi_s^{\sigma}$ and $c_0^*$ both preserve
the length induced by $g$. The latter follows from $c_0$ being an isometry.
\end{proof}

The next proposition shows that any anti-symplectic involution $c$ on $T^*S^n$
as before locally looks like $c_0^*$ for $c_0=\mathrm{id}$
or $c_0= r$.
\begin{prop}\label{INVlinear:prop:main}
    Let $c\colon T^*S^n \rightarrow T^*S^n$ be an anti-symplectic involution restricting to $\sigma\colon S^n \longrightarrow S^n$ where either $\sigma \simeq \mathrm{id}$ or $\sigma \simeq r$. Then for every $\eta_2>0$ there exists 
    $\eta_2 > \eta_1 >0$ and
    a Hamiltonian isotopy
    \[
        \psi_t^H \colon T^*S^n \longrightarrow T^*S^n
    \]
    with $\operatorname{supp}H \subset T^*_{\eta_2}S^n$ such that 
    $$c \psi^H_1 = c_0^* \; \text{ on } \; T^*_{\eta_1}S^n,$$
    where $c_0= \mathrm{id}$ or $c_0=r$.
\end{prop}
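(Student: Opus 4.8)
The plan is to rephrase the statement as a normal‑form result for the \emph{symplectomorphism} $\phi:=c\circ c_0^*$ and to prove that by the standard package: cotangent lifts, the Alexander (fibrewise rescaling) trick, a flux computation, and cut‑offs. Since $c$ is an involution, the required identity $c\circ\psi_1^H=c_0^*$ on $T^*_{\eta_1}S^n$ is equivalent to $\psi_1^H=c\circ c_0^*=:\phi$ on $T^*_{\eta_1}S^n$. Choosing $c_0=\mathrm{id}$ when $\sigma\simeq\mathrm{id}$ and $c_0=r$ when $\sigma\simeq r$, the composition of the two anti‑symplectic maps $c$ and $c_0^*$ is symplectic, preserves the zero‑section $0_{S^n}$, and restricts there to $\sigma\circ c_0$, which is smoothly isotopic to $c_0\circ c_0=\mathrm{id}$ because $\sigma\simeq c_0$ and $c_0^2=\mathrm{id}$. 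So it suffices to prove: if $\phi$ is a symplectomorphism of $T^*S^n$ with $\phi(0_{S^n})=0_{S^n}$ and $\phi|_{0_{S^n}}\simeq\mathrm{id}$, then for every $\eta_2>0$ there are $\eta_1\in(0,\eta_2)$ and a Hamiltonian isotopy $\psi_t^H$ with $\operatorname{supp}H\subset T^*_{\eta_2}S^n$ and $\psi_1^H=\phi$ on $T^*_{\eta_1}S^n$. Throughout I will use the following cut‑off principle: a Hamiltonian that is only defined near $0_{S^n}$, or not compactly supported, can be multiplied by a function of $\norm{p}^2$ (smooth across the zero‑section) that equals $1$ near $0_{S^n}$ and is supported in $T^*_{\eta_2}S^n$; since the zero‑section is invariant under such a truncated flow, trajectories starting in $T^*_{\eta_1}S^n$ stay in the region where the cut‑off is $1$ for all $t\in[0,1]$ once $\eta_1$ is small enough, so the truncated flow still equals the original one on $T^*_{\eta_1}S^n$ while gaining the support property. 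Concatenating finitely many such isotopies and reparametrising gives a single $\psi_t^H$.

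First I would reduce to $\phi|_{0_{S^n}}=\mathrm{id}$. Pick a smooth isotopy $\{\sigma_t\}$ of $S^n$ from $\mathrm{id}$ to $\phi|_{0_{S^n}}$ and let $\widehat{\sigma_t}$ be its cotangent lift; this is a Hamiltonian isotopy of $T^*S^n$ whose generating Hamiltonian $K_t(q,p)=p(\dot\sigma_t(\sigma_t^{-1}(q)))$ is linear in the fibre, hence smooth on all of $T^*S^n$ and vanishing on $0_{S^n}$. As $\widehat{\sigma_1}$ and $\phi$ agree on $0_{S^n}$, the map $\widehat{\sigma_1}^{-1}\circ\phi$ fixes $0_{S^n}$ pointwise and $\phi=\widehat{\sigma_1}\circ(\widehat{\sigma_1}^{-1}\circ\phi)$; after cutting off $\widehat{\sigma_\bullet}$ it is enough to treat the second factor, so assume $\phi|_{0_{S^n}}=\mathrm{id}$. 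Next I would arrange $D\phi|_{0_{S^n}}=\mathrm{id}$: along the zero‑section $D\phi$ is a symplectic automorphism of $TS^n\oplus T^*S^n$ that is the identity on $TS^n$, hence a shear $(x,p)\mapsto(x+B(q)p,p)$ with $B(q)$ a symmetric form depending smoothly on $q$. This shear is the time‑$1$ linearised flow (with the appropriate sign) of the fibrewise‑quadratic Hamiltonian $h(q,p)=\tfrac12\langle p,B(q)p\rangle$, which vanishes to second order on $0_{S^n}$; replacing $\phi$ by $\psi_1^{-h}\circ\phi$, cut off near $0_{S^n}$, we may assume in addition that $D\phi|_{0_{S^n}}=\mathrm{id}$, and this new $\phi$ still fixes the zero‑section pointwise since $X_h$ vanishes there.

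Now I would run the Alexander trick. For $t\in(0,1]$ let $m_t(q,p)=(q,tp)$, so $m_t^*\omega_{\mathrm{can}}=t\,\omega_{\mathrm{can}}$, and set $\phi_t:=m_{1/t}\circ\phi\circ m_t$, a symplectomorphism defined near $0_{S^n}$ with $\phi_1=\phi$. Because $\phi$ fixes $0_{S^n}$ pointwise with $D\phi|_{0_{S^n}}=\mathrm{id}$, a fibre‑Taylor expansion shows $t\mapsto\phi_t$ extends smoothly to $t=0$ with $\phi_0=\mathrm{id}$. Thus $\{\phi_t\}_{t\in[0,1]}$ is a symplectic isotopy near $0_{S^n}$ from $\mathrm{id}$ to $\phi$, each of whose members fixes the zero‑section pointwise, so its generating vector fields $X_t$ vanish on $0_{S^n}$. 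Its flux is zero: automatic for $n\ge2$ since $\Homol^1(T^*S^n)=0$, and for $n=1$ one pairs the flux class $\int_0^1[\iota_{X_t}\omega_{\mathrm{can}}]\,dt$ with the generator $[S^1]$ of $\Homol_1(T^*S^1)$ and obtains $0$ because $X_t$ vanishes along $S^1$. By the standard flux argument there is then a Hamiltonian isotopy, defined near $0_{S^n}$, with time‑$1$ map $\phi$; a final cut‑off turns it into one supported in $T^*_{\eta_2}S^n$ and equal to $\phi$ on a small $T^*_{\eta_1}S^n$. Unwinding the reductions, $\psi_1^H=\phi=c\circ c_0^*$ on $T^*_{\eta_1}S^n$, hence $c\circ\psi_1^H=c_0^*$ there, as desired.

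The main obstacle is not any single step but the bookkeeping: each reduction only makes the relevant map honest on a successively smaller cotangent ball, so one must nest the radii $\eta_2>\eta_1'>\eta_1''>\dots>\eta_1>0$ so that the composed Hamiltonian isotopy is still globally defined, supported in $T^*_{\eta_2}S^n$, and equal to $\phi$ on $T^*_{\eta_1}S^n$; and one must be mildly careful with smoothness of cut‑offs of $\norm{p}$ across the zero‑section (use functions of $\norm{p}^2$) and with completeness of the truncated flows. The only genuinely conceptual point is the vanishing of the flux, which is precisely where the hypothesis that $\phi$ preserves the Lagrangian zero‑section pointwise — ultimately a consequence of $\sigma\simeq c_0$ — is used.
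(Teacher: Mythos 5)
Your proposal is correct and follows essentially the same route as the paper: cotangent-lift a smooth isotopy of $S^n$ to reduce to a symplectomorphism $\phi$ of $T^*S^n$ fixing the zero section pointwise, apply the fibrewise-rescaling (Alexander) trick to produce a symplectic isotopy to the identity, dispose of the flux obstruction (automatic for $n\ge 2$; pairing with $[S^1]$ for $n=1$), and cut off with functions of $\norm{p}^2$. The one step you perform that the paper does not is the shear arranging $D\phi|_{0_{S^n}}=\mathrm{id}$, and it is in fact redundant. Writing $\phi(q,p)=(u(q,p),v(q,p))$ in local coordinates, the $t\to 0$ limit of $m_{1/t}\circ\phi\circ m_t$ is $(q,p)\mapsto\bigl(q,\,\partial_p v(q,0)\,p\bigr)$: it depends only on the block $\partial_p v|_{0_{S^n}}$ and is blind to $\partial_p u|_{0_{S^n}}$. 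But if $\phi$ is symplectic and fixes $0_{S^n}$ pointwise, then $D\phi|_{(q,0)}$ has identity $\partial_q u$-block and vanishing $\partial_q v$-block, and the symplecticity relations force $\partial_p v(q,0)=\mathrm{id}$ automatically; the only undetermined block is the (symmetric) $\partial_p u(q,0)$, which is precisely what your shear kills and precisely what the Alexander limit never sees. The paper's proof exploits this observation to go directly from the cotangent-lift reduction to the rescaling, omitting the shear. A second, purely cosmetic, difference: the paper starts from $\psi:=c\circ\sigma^*$ with $\sigma:=c|_{S^n}$, which fixes $0_{S^n}$ pointwise from the outset, and concatenates with a Hamiltonian isotopy $\sigma^*\simeq c_0^*$ only at the end, whereas you compose with $c_0^*$ immediately and then cotangent-lift the zero-section isotopy $\sigma\circ c_0\simeq\mathrm{id}$; both orderings are valid, and your cut-off bookkeeping is in fact spelled out more carefully than in the paper.
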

\begin{proof}
    Consider the symplectomorphism $\psi:= c \circ \sigma^*$. Write in local coordinates $\psi(q,p)=(u(q,p), v(q,p))$ with $u(q,p) \in S^n$ and
$v(q,p)\in T_{u(q,p)}S^n$. Since $c=T^*\sigma$ on $S^n$ we have
$u(q,0) = q$.
    Consider the following isotopy of symplectomorphisms $\psi_t \colon T^*S^n \to T^*S^n$ between
$\psi_0 = \mathrm{id}$ and $\psi_1(q,p)= \psi$:
\begin{align*}
\psi_{t}(q,p) =
\begin{cases}
 	(u(q,tp), \frac{v(q,tp)}{t}) \qquad \qquad \; t\neq 0\\
 	(u(q,0), (\partial_p v(q,0))p) \qquad t=0
\end{cases}
\end{align*}
$\psi_0(q,p) = (q,p)$
because
as it can be seen from writing $D\psi_{(q,0)}$ in local coordinates:
\begin{align*}
D\psi_{(q,0)} =
    \begin{pmatrix}
        &\mathrm{id} & \partial_q v(q,0)\\
        &\partial_p u(q,0) &\partial_p v(q,0)
    \end{pmatrix}
\end{align*}
and using that $D\psi_{(q,0)}$ is a symplectic matrix, we get
\[
\partial_p v(q,0) = \mathrm{id}.
\]
$\psi_t$ is a Hamiltonian isotopy:
For $n\geq 2$ this is automatic. For $n=1$ it follows from $\psi_t(S^n) = S^n$.
Concatenate $\psi_t$ with a Hamiltonian isotopy $\sigma^* \simeq c_0^*$.
Finally cut off the Hamiltonian so that the resulting Hamiltonian $H$ has support in $T^*_{\eta_2}S^n$. 
Clearly, $\psi_1^H = c \circ c_0^*$ on $T^*_{\eta_1}S^n$ for $\eta_1$
 small enough.
 In particular, $c\psi_1^H=c_0^*$ on $T^*_{\eta_1}S^n$.
\end{proof}
\subsection{The symmetry of the surgery part.}\label{subsec:INVinvariance_Mak_Wu}
Let $V\subset M$ be a Weinstein neighbourhood of $S$ and
$\varphi\colon V \to T_{\delta}^*S^n$ a symplectomorphism
for some $\delta>0$.
Let $0 < \epsilon < \delta$ small enough, such that $c(U) \subset U$ 
for $U:= \varphi ^{-1}(T_{\epsilon}^*S^n)$.
Consider 
$\Phi$
as a map
\[
 U \times U \times \mathbb{C} \to V \times V \times \mathbb{C}.
\]
This induces via $\widetilde{\varphi}\times \mathrm{id}$ the map
\begin{align*}
\Phi \colon T_\epsilon^*S^n \times T_\epsilon^*S^n  \times \mathbb{C} &\to T_\delta^*S^n  \times T_\delta^*S^n \times \mathbb{C} \\
(\xi_1,\xi_2,z) &\mapsto (c(-\xi_2),c(\xi_1),z).
\end{align*}
Let $\psi_t \colon T^*S^n \to T^*S^n$ be the Hamiltonian isotopy from Proposition \ref{INVlinear:prop:main}. Consider the Hamiltonian isotopy
\begin{align*}
\Psi_t\colon T_\delta^*S^n \times T_\delta^*S^n \times T^*\mathbb{R} &\to T_\delta^*S^n \times T_\delta^*S^n \times T^*\mathbb{R} \\
(\xi_1,\xi_2, p) &\mapsto (\psi_t(\xi_1), -\psi_t(-\xi_2),p).
\end{align*}
We consider surgery in $T_{\epsilon}^*S^n$, so that the handle
$\hat{H}_{\nu}$ is contained in $T_{\epsilon}^*S^n$.
We claim that
\begin{itemize}
\item $\Psi_1(\hat{H}_{\nu}) = \Phi(\hat{H}_{\nu})$,
\item $\Psi_t\vert_{S \times S \times \mathbb{R}} = \mathrm{id}$,
\item $\Psi_t(N_{\Delta_S}^* \times\{p\}) = N_{\Delta_S}^* \times \{p\}$ for any $p\in i \R$,
\item $\pi_\C \circ \Psi_t = \pi_\C$.
\end{itemize}
We check these properties:
\begin{itemize}
\item 
Let $\xi \in T^*S$ and $(q,p) \in T^*\R$ such that $$\sqrt{\norm{ \xi } ^2 + p^2}< \epsilon.$$
We introduce the following abbreviations:
\[
    s(\norm{ \xi }, \lvert p \rvert ) = \nu\left(\sqrt{\norm{ \xi } ^2 + p^2}\right) \frac{\norm{ \xi }}{\sqrt{\norm{ \xi } ^2 + p^2}}
\]
and
\[
    r(\norm{ \xi }, \lvert p \rvert ) = \nu\left(\sqrt{\norm{ \xi } ^2 + p^2}\right) \frac{\lvert p \rvert}{\sqrt{\norm{ \xi } ^2 + p^2}}.
\]
Elements of $\hat{H}_{\nu}$ are of the form 
$$\alpha :=(\xi, \psi_{s(\norm{\xi}, \lvert p \rvert)}^\sigma (-\xi), (r(\norm{ \xi }, \lvert p \rvert) + q,p)).$$
Therefore, elements of $\Psi_1(\hat{H}_{\nu})$ are of the form
$$\Psi_1(\alpha) = (cc_0^* (\xi), -cc_0^*(-\psi_{s(\norm{ \xi})}^\sigma (- \xi)), (r(\norm{ \xi }, \lvert p \rvert) + q,p)).$$
Put
$$\zeta:= c_0^*(-\phi_s^\sigma(-\xi)).$$
Then $\norm{ \zeta } = \norm{ \xi }$
by part $2$ of Lemma \ref{INVlinear:lem:properties}. By part $1$ of Lemma \ref{INVlinear:lem:properties} we have the equality
\[
    c_0^*(\xi) = -\psi_s^\sigma(-c_0^*(-\psi_s^\sigma(-\xi)))= - \psi_s^{\sigma}(-\zeta).
\]
Thus
\[
    \Psi_1(\alpha) = (c(-\psi_{s(\norm{ \zeta })}^\sigma (-\zeta)), -c(\zeta), (r(\norm{ \zeta }, \lvert p \rvert ) + q, p))
\]
which are precisely the elements of $\Phi(\hat{H}_{\nu})$. 
\item ${\Psi_t}(S\times S \times \mathbb{R}) = S\times S \times \mathbb{R}$: For $(x,y)\in S \times S$
\begin{align*}
\Psi_t(x,y,p) &= (\psi_t(x), -\psi_t(-y),p) \in S \times S \times \R
\end{align*}
because $\psi_t(S) = S$.
\item $\Psi_t(\xi, -\xi,p) = (\psi_t(\xi),-\psi_t(\xi),p)$ and thus 
 $\Psi_t(N_{\Delta_S}^* \times\{p\}) = N_{\Delta_S}^* \times \{p\}$.
\item $\pi_\C \circ \Psi_t = \pi_\C$ is clear.
\end{itemize}

Thus the surgery model $(S\times S \times \R) \#_{\Delta_S}N^*_{\Delta_S \times \{0\}}$
is Hamiltonian isotopic to the image of itself under $\Phi$.
The smoothing and the extension to a cobordism with ends
$S\times S$, $\Delta$ and $(S\times S) \#_{\Delta_S} \Delta$ can be done while keeping the Hamiltonian isotopy type.
Hence the surgery part of the cobordism is Hamiltonian isotopic to the image of itself under $\Phi$.

\subsection{The symmetry of the suspension part.}\label{subsec:INVinvariance_suspension}
This is very similar to the preceeding surgery part.
Again, it is enough to show the statement for the surgery model.
Let $(S\times S) \#_{\Delta_S}^{\nu_t} \Delta$, $t\in [0,1]$ be a Hamiltonian isotopy, where
all $\nu_t$ are admissible, except for $\nu_1$ which coincides with $\nu_\epsilon^{\mathrm{Dehn}}$.
The Hamiltonian $K_t \colon T^*(S\times S) \to T^*(S\times S)$ generating the isotopy can be chosen to be of the form
$K_t(\xi_1, \xi_2) = K_t(\norm{ \xi_1 },\norm{ \xi_2 })$,
see \cite[Lemma 3.6]{MakWu}. Moreover, $K_t$ can be chosen to be zero near $0$ and near $1$.
The suspension cobordism is the cylindrical extension of the Lagrangian
\[
    \mathcal{S}:= \left \{ (\psi_t^K(x), t-iK_t(\psi_t^K(x))) \in M \times M^- \times \C \, \big \vert \, x\in H^{\nu_0}, t\in [0,1] \right \}.
\]
Consider the Hamiltonian isotopy $\Psi_t$ from before.
We claim that
\begin{itemize}
    \item $\Psi_1(\mathcal{S}) = \Phi(\mathcal{S})$
    \item $\Psi_t \left( {\left((S\times S) \#_{\Delta_S}^{\nu_0} \Delta \right) \times \{p\} } \right) = {\left((S\times S) \#_{\Delta_S}^{\nu_0} \Delta \right) \times \{p\} }$ for $p\in \R_{<0}$
    \item$\Psi_t \left( {\left((S\times S) \#_{\Delta_S}^{\nu_0} \Delta \right) \times \{p\} } \right) = {\left((S\times S) \#_{\Delta_S}^{\nu_1} \Delta \right) \times \{p\} }$ for $p\in \R_{>1}$
    \item $\pi_{\C} \circ \Psi_t = \pi_{\C}$
\end{itemize}
Let us check these properties.
\begin{itemize}
    \item Elements of ``the handle part" of $\mathcal{S}$ can be written as
    $$\alpha =(\xi, \psi^{\sigma}_{\nu(\vert \vert \xi \vert \vert)}(-\xi), t-iK_t(\norm{ \xi }))$$
    for some $\xi\in T_{\epsilon}^*S$.
    Hence elements of the corresponding part of $\Psi_1(\mathcal{S})$ are of the form
    $$\Psi_1(\alpha) = (cc_0^*(\xi), -cc_0^*(-\psi^{\sigma}_{\nu(\norm{ \xi })}(-\xi)), t-iK_t(\norm{ \xi }). $$
    Elements of the corresponding part of 
    $\Phi(\mathcal{S})$ are of the form 
    \[
    (c(-\psi^{\sigma}_{\nu(\norm{ \xi })}(-\zeta)), -c(\zeta), t-iK(\norm{ \eta })
    \]
    for $\zeta \in T_{\epsilon}^*S$. As before,
    using Lemma \ref{INVlinear:lem:properties},
    the elements are in $1:1$-correspondence via 
    $\zeta = c_0^*(-\phi_{\nu(\norm{ \xi })}(-\xi))$.
    
    \item It is very similar, but simpler to see that $\Psi_t$ preserves $H^{\nu_0} \times \{t\}$ for $t\in \R_{<0}$, and also $H^{\nu_1} \times \{t\}$ for $t\in \R_{>1}$.
    
    \item The last item is obvious.
    
\end{itemize}
Therefore, the suspension part $\mathcal{S}$ of the cobordism is Hamiltonian isotopic to 
$\Phi(\mathcal{S})$.

The symmetry of the surgery part shown in section \ref{subsec:INVinvariance_Mak_Wu} and the symmetry of the suspension part shown above together prove Theorem \ref{Ioutline_of_proof:thm:invariance}.

\section{Background on Floer cohomology}\label{sec:ADD}
\subsection{Floer cohomology for symplectomorphisms.}\label{subsec:AdddefHF}
For convenience of the reader we briefly collect the basic ideas and notation for Floer cohomology of a symplectomorphism following \cite{DostSal}. For more detailed expositions, we refer the reader to 
\cite{DostSal} for the monotone case, and to \cite{seidel97}  and \cite{lee} for $W^+$-symplectic manifolds. 

Let $(M,\omega)$ be a closed symplectically aspherical symplectic manifold.
Let $f\in \Symp(M)$ be a symplectomorphism. We first need to choose a Hamiltonian perturbation, namely a family of Hamiltonian functions $\{H_s\colon X \longrightarrow \mathbb{R}\}_{s \in \mathbb{R}}$.
It should be $f$-periodic, in the sense that
$$H_s=H_{s+1} \circ f.$$
Roughly speaking, Floer cohomology of $f$ is Morse cohomology on the twisted loop space
\[
    \Omega_{f} := \left \{ x \in C^{\infty}(\R, X) \; \mid \; x(s+1) = f(x(s)) \right\}
\]
with the closed $1$-form
\[
    \lambda_H(x)(\xi) = \int_0^1 \omega\left(\dot{x}(s)-X^H_s(x(s)), \xi(s) \right) \, ds.
\]
Here, $X^H_s$ denotes the Hamiltonian vector field of $H_s$.
We write $P_{f}(H)$ for the set of $x\in \Omega_{f}$ satisfying $\dot{x}(s) = X_s^H(x(s))$.
For a generic choice of $H$, $P_{f}(H)$ is a finite set. 
The vector space underlying the Floer complex is the $\Lambda$-vector space
generated by $P_{f}(H)$:
\begin{align*}
\CF^*(f;H) = \bigoplus_{x \in P_{f}(H)} \Lambda x.
\end{align*}

$\CF^*(f)$ is $\Z/2$-graded as follows. A generator $x\in P_{f}(H)$ corresponds to a fixed point $x(0)$ of ${f}_H:= (\psi_1^H)^{-1}f$. The degree 
$\mathrm{deg}(x) \in \Z/2$ of $x$
is related to the index of $x(0)$ by
\[
    (-1)^{\mathrm{deg}(x)} =
      \mathrm{sign} \left( \det ( \mathrm{id} - (Df_H)_{x(0)} ) \right).
\]

To define the differential, we need to choose a family of almost complex structures $\mathcal{J}= \{J_s\}_{s\in \R}$ on $M$, compatible with $\omega$ and $f$-periodic, meaning
$J_s= {f}^*(J_{s+1})$.
One considers finite-energy solutions
\[
u\colon \R \times \R \longrightarrow X, (s,t) \longmapsto u(s,t)
\]
of Floer's equation
$$\frac{\partial u}{\partial t} + J_s(u) \left( \frac{\partial u}{\partial s} - X^H_s(u) \right) = 0,$$
which are $f$-periodic in $s$, $u(s+1,t) = f(u(s,t))$,
and satisfy the asymptotic conditions
$$\lim_{t\to -\infty} u(s,t) = x(s)
\text{  and  }
\lim_{t \to \infty} u(s,t) = y(s)
$$
for some Hamiltonian chords $x,y \in \Omega_f$.
Consider the moduli space $\mathcal{M}(x,y;\mathcal{J},H)$ of all such solutions $u$.
For regular $(\mathcal{J},H)$, the moduli space is a smooth manifold.
$\mathbb{R}$ acts on the one-dimensional component $\mathcal{M}^1(x,y;\mathcal{J},H)$ by translation, and the quotient set $\widehat{\mathcal{M}}^1(x,y;\mathcal{J},H) = \mathcal{M}^1(x,y;\mathcal{J},H)/ \mathbb{R}$ is discrete.

The Floer differential $\partial \colon \CF^*(f; \mathcal{J},H) \longrightarrow \CF^*(f; \mathcal{J},H)$
is defined by
\begin{align*}
\partial(x) = \sum\limits_{y \in P_{\varphi}(H)} \sum\limits_{u\in \widehat{\mathcal{M}}^1(x,y;\mathcal{J},H)} y.
\end{align*}
The homology of the chain complex $CF^*(f)$ is called the Floer cohomology  of $f$ with Floer data $(\mathcal{J},H)$ and denoted by $HF^*(f;\mathcal{J},H)$.

There are graded continuation maps for different choices of Floer datum:
Suppose $(H, \mathcal{J})$ and $(H', \mathcal{J}')$ are regular Floer data as above. Choose a family
$(H_{s,t}, J_{s,t})$ 
that satisfies the periodicity assumptions
\[
    J_s = f^*(J_{s+1}) \text{ and } J_s' = f^*(J_{s+1}')
\]
and interpolate between $(H_s, J_s)$ and $(H_s', J_s')$, i.e.
\begin{align*}
    H_{s,t} = H_s', J_{s,t} = J_t' \qquad &\text{for $t$ near $-\infty$},\\
    H_{s,t} = H_s, J_{s,t} = J_t  \qquad &\text{for $t$ near $\infty$}.
\end{align*}
We denote by $\mathcal{M}(x,y; J_{s,t}, H_{s,t})$ the moduli space of solutions to the $1$-parametric Floer equation
\[
\frac{\partial u}{\partial t} + J_{s,t}(u) \left( \frac{\partial u}{\partial s} - X^H_{s,t}(u) \right) = 0
\]
that are $f$-periodic in $s$ and tend to $x$ and $y$ as $t\to \pm \infty$.
For generic choice of $(H_{s,t}, J_{s,t})$ the moduli space is a manifold
and its zero-dimensional component $\mathcal{M}^0(x,y; J_{s,t}, H_{s,t})$
is discrete.
The chain-level continuation map is the chain map
\begin{align*}
    C_{H_{s,t},J_{s,t}} \colon \CF^*(f; \mathcal{J}, H) &\longrightarrow \CF^*(f,\mathcal{J}',H)\\
    x &\longmapsto \sum\limits_{y \in P_{\varphi}(H)} \sum\limits_{u\in \mathcal{M}^0(x,y;J_{s,t},H_{s,t})} y.
\end{align*}
The map induced in cohomology is independent of the choice of homotopy $(H_{s,t}, J_{s,t})$. This allows us to identify the cohomology groups $\HF^*(f,\mathcal{J}, H)$ and $\HF^*(f,\mathcal{J}', H')$ and simply write
$\HF^*(f)$ for the cohomology group of $f$.
\subsection{Lagrangian Floer cohomology.}\label{subsec:ADDdeflagHF}
We recall here Lagrangian Floer cohomology
for relatively aspherical Lagrangians.
Given two closed Lagrangians $L_0, L_1 \subset M$, choose $H$ so that
$\psi_1^H(L_0) \cap L_1$ is a transverse intersection at finitely many points. Then the underlying $\Lambda$-vectorspace of $CF(L_0,L_1; H,J)$ is generated by those points.
The differential is defined by counting $J$-holomorphic strips, using a $w$-compatible almost complex structure
$J$ on $M$.
Floer's equation reads:
\begin{equation*}
    \begin{cases}
      \frac{\partial u}{\partial t} + J_s(u) \left(
      \frac{\partial u}{\partial s} - X_s^H(u) \right) = 0\\
      u(0,t) \in L_0, \qquad u(1,t) \in L_1\\
      \lim_{t\to -\infty} u(s,t) = \psi_s^H(z) \text{ for some } z\in L_0 \\
      \lim_{t\to \infty} u(s,t) = \psi_s^H(w) \text{ for some } w\in L_0 \\
    \end{cases}\,.
\end{equation*}

If $L_0$ and $L_1$ are oriented, we define the degree of $x$ as follows:
\[
 (-1)^{\mathrm{deg}(x)} = (-1)^{\frac{n(n+1)}{2}}\nu(x; L_0,L_1), 
\]
where $\nu(x;L_0,L_1)\in \{\pm 1\}$ denotes the intersection index of $L_0$ and $L_1$ at $x$. This number is defined to be $+1$ if $v_1, \dots , v_{2n}$ is a positive basis
for $T_xM$ whenever $v_1,\dots , v_n$ is a positive basis for $T_xL_0$ and 
$v_{n+1}, \dots , v_{2n}$ is a positive basis for $T_xL_1$.
See \cite[Section 2d]{seidel00} for the grading, and \cite{robbin_salamon}
for the intersection index.
\subsection{Proof of Proposition \ref{HFcompare:prop}.}\label{subsec:ADDcompare}
Choose Floer datum $H_s$ and $J_s$ as in Section \ref{subsec:AdddefHF}.
The generators of $\CF(\phi; H_s, J_s)$ are points
$x\in M$ such that $\phi(x)=\phi_1^H(x)$.
For the Lagrangian Floer complex, we choose the following Floer data:
\begin{align*}
K_s(x,y)= -\frac{1}{2} H_{\frac{1-s}{2}}(x) - \frac{1}{2}H_{\frac{s+1}{2}}(y).
\end{align*}
and
\begin{align*}
\tilde{J}_s := \tilde{J}_s^{(1)} \oplus \tilde{J}_s^{(2)} := J_{\frac{1-s}{2}} \oplus (-J_{\frac{s+1}{2}}).
\end{align*}
Generators of $\CF(\Delta, \Gamma_{\phi}; K_s, \tilde{J}_s)$ are of the form
$(x,\phi(x))\in \psi_1^K (\Gamma_{\mathrm{id}})$.
We show that the map
\begin{align*}
\CF(\phi; H_s, J_s) &\longrightarrow \CF(\Delta, \Gamma_{\phi}; K_s, \tilde{J}_s)\\
x &\longmapsto (x, \phi(x))
\end{align*}
is a chain isomorphism.
This follows from checking that generators get mapped to generators, and
solutions to 
\begin{equation*}
    \begin{cases}
      \frac{\partial v}{\partial t} + \tilde{J}_s(v) \left(
      \frac{\partial v}{\partial s} - X_s^K(v) \right) = 0\\
      v(0,t) \in \Delta \text{ and } v(1,t) \in \Gamma_{\phi}\\
      \lim_{t\to -\infty} v(s,t) = \psi_s^K(z) \text{ for some } z\in \Delta \\
      \lim_{t\to \infty} v(s,t) = \psi_s^K(w) \text{ for some } w\in \Delta \\
    \end{cases}
\end{equation*}
are in one to one correspondence to solutions of
\begin{equation*}
    \begin{cases}
      \frac{\partial u}{\partial t} + J_s(u) \left(
      \frac{\partial u}{\partial s} - X_s^H(u) \right) = 0\\
      u(1,t) = \phi(u(0,t)\\
      \lim_{t\to -\infty} u(s,t) = \psi_s^K(x) \\
      \lim_{t\to \infty} u(s,t) = \psi_s^K(y).  \\
    \end{cases}
\end{equation*}
The correspondence is
given by
\begin{align*}
v(s,t) = (v_1(s,t), v_2(s,t)) \longleftrightarrow u(s,t)=
\begin{cases}
v_1(1-2s), -2t) \qquad s\in [0, \frac{1}{2}]\\
v_2(2s-1,-2t) \qquad s\in [\frac{1}{2}, 1].
\end{cases}
\end{align*}

For the grading: Let $(x,x)\in \Delta \cap \Gamma_{\phi}$.
Let $\mathcal{B}^M$ be a basis of $T_xM$ and consider
the bases $\mathcal{B}^{\Delta}$ and $\mathcal{B}^{\Gamma_{\phi}}$ of 
$T_{(x,x)}\Delta$ and $T_{(x,x)}\Gamma_{\phi}$ associated to $\mathcal{B}^M$.
Note that $\mathcal{B}^{\Delta}$ and $\mathcal{B}^{\Gamma_{\phi}}$ are either both positive or both negative. Hence $\nu(x,x)=1$ if and only if the basis $\mathcal{B}=\left(\mathcal{B}^{\Delta}, \mathcal{B}^{\Gamma_{\phi}} \right)$ is a positive of $T_{(x,x)}M\times M^-$
One computes
\[
        \mathcal{B} = 
        \begin{pmatrix}
            \mathrm{Id} & \mathrm{Id} \\
            \mathrm{Id} & D\phi 
        \end{pmatrix}
        \mathcal{B}_0,
        \]
where $\mathcal{B}_0 = \left( (\mathcal{B}^M,0), (0, \mathcal{B}^M) \right)$.
$\mathcal{B}_0$ is positively oriented if and only if $n$ is even. The determinant of the matrix
is $\det (D\phi - \mathrm{Id}) = \det(\mathrm{Id}- D\phi)$. Hence $$\nu(x,x) = (-1)^n\mathrm{sign} \det (\mathrm{Id}- D\phi)$$ and
\begin{align*}
    (-1)^{\mathrm{deg}(x,x)} &= (-1)^n (-1)^{\frac{2n(2n+1)}{2}}\nu(x,x) \\
    &=(-1)^n (-1)^{\frac{2n(2n+1)}{2}}(-1)^n\mathrm{sign} \det (\mathrm{Id}- D\phi) \\
    &= (-1)^n (-1)^{\frac{2n(2n+1)}{2}}(-1)^n (-1)^{\mathrm{deg}(x)}\\
    &= (-1)^{\mathrm{deg}(x)}.
\end{align*}
This shows that the isomorphism above indeed preserves the grading.


\section*{Appendix A. Algebraic background.}\label{appendix}
We briefly explain the algebraic background relevant for the definition of the the main character of this paper: the element $A \in \HF(\tau^{-1})$.
We follow the conventions for $A_{\infty}$-machinery from \cite{seidelbook}.

Suppose $\mathcal{A}$ is a homologically unital $A_\infty$-category. The Yoneda embedding is a functor
\[
\mathcal{Y} \colon \mathcal{A} \rightarrow mod_{\mathcal{A}}
\]
taking an object $L$ to the $\mathcal{A}$-module 
$\mathcal{Y}(L)$
defined by
\[
\mathcal{Y}(L)(K) := Mor_{\mathcal{A}}(K,L).
\]
and 
\[
\mu^d_{\mathcal{Y}(L)}(b,a_{d-1}, \dots, a_1) := \mu^d(b, a_{d-1}, \dots , a_1)
\]
for $a_i \in Mor_{\mathcal{A}}(K_{i-1},K_i)$, $i\in \{1, \dots , d-1\}$ 
and $b \in \mathcal{Y}(L)(K_{d-1}) = Mor_{\mathcal{A}}(K_{d-1},L)$.

By \cite[Section 2g]{seidelbook} the Yoneda embedding induces a unital, full and faithfull embedding
\[
\Homol(\mathcal{Y}) \colon \Homol(\mathcal{A}) \to \Homol(mod_{\mathcal{A}}).
\]
The derived cateogory $\mathcal{DA}$ of $\mathcal{A}$
can be constructed as follows: Take a triangulated completion of the image of $\mathcal{Y}$ in $mod_{\mathcal{A}}$ and take its homology category.

The following is an immediate consequence of the properties of the Yoneda embedding. 

\begin{cor}
 Each $f\in Mor_{D\mathcal{A}}(\mathcal{Y}(L_1), \mathcal{Y}(L_2))$ can be represented by 
 $\mathcal{Y}(\alpha)$
 for some $\alpha \in Mor_{\mathcal{A}}(L_1,L_2)$. 
 Moreover, $[\alpha]\in Mor_{H(\mathcal{A})}(L_1,L_2)$ is uniquely defined. 
 \end{cor}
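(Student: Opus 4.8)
The plan is to obtain both assertions as formal consequences of the full faithfulness of $\Homol(\mathcal{Y})$ recorded above, together with the observation that morphism spaces between objects already lying in the image of $\mathcal{Y}$ are unchanged when one passes from $mod_{\mathcal{A}}$ to the triangulated completion used to define $D\mathcal{A}$.

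First I would unwind what $Mor_{D\mathcal{A}}(\mathcal{Y}(L_1),\mathcal{Y}(L_2))$ means. By construction $D\mathcal{A}$ is the homology category of a triangulated enlargement (by twisted complexes) of the image of $\mathcal{Y}$ inside $mod_{\mathcal{A}}$; since the inclusion of an $A_\infty$-category into its category of twisted complexes is cohomologically full and faithful \cite{seidelbook}, there is a canonical identification
\[
Mor_{D\mathcal{A}}(\mathcal{Y}(L_1),\mathcal{Y}(L_2))
\;\cong\;
Mor_{\Homol(mod_{\mathcal{A}})}(\mathcal{Y}(L_1),\mathcal{Y}(L_2)).
\]
Next I would invoke the full faithfulness of $\Homol(\mathcal{Y})$, i.e. that the induced map
\[
\Homol(\mathcal{Y})\colon Mor_{\Homol(\mathcal{A})}(L_1,L_2)\longrightarrow Mor_{\Homol(mod_{\mathcal{A}})}(\mathcal{Y}(L_1),\mathcal{Y}(L_2))
\]
is a bijection. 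Composing the two displays, every $f\in Mor_{D\mathcal{A}}(\mathcal{Y}(L_1),\mathcal{Y}(L_2))$ equals $\Homol(\mathcal{Y})([\alpha])$ for a unique class $[\alpha]\in Mor_{\Homol(\mathcal{A})}(L_1,L_2)=\Homol^0\bigl(Mor_{\mathcal{A}}(L_1,L_2)\bigr)$. Choosing a cocycle representative $\alpha\in Mor_{\mathcal{A}}(L_1,L_2)$ of $[\alpha]$, the module morphism $\mathcal{Y}(\alpha)$ represents $f$; this yields the existence statement, and uniqueness of $[\alpha]$ is precisely injectivity of $\Homol(\mathcal{Y})$ on morphism spaces.

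The only step that needs genuine care — and hence the main (and fairly mild) obstacle — is the first identification: one must check that enlarging the image of $\mathcal{Y}$ by cones and then passing to the homology category does not create new morphisms between two objects that already lie in the image of $\mathcal{Y}$. This is a standard feature of Seidel's twisted-complex formalism \cite{seidelbook}, so no new argument is required; everything else in the proof is purely formal.
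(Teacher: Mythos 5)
Your proof is correct and follows essentially the same route as the paper's: both reduce to the identification $Mor_{D\mathcal{A}}(\mathcal{Y}(L_1),\mathcal{Y}(L_2))\cong\Homol\bigl(Mor_{mod_{\mathcal{A}}}(\mathcal{Y}(L_1),\mathcal{Y}(L_2))\bigr)$ and then invoke full faithfulness of $\Homol(\mathcal{Y})$. You are slightly more explicit than the paper in justifying that first identification (via cohomological full faithfulness of the twisted-complex inclusion), which the paper simply asserts; otherwise the arguments coincide.
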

 \begin{proof}
First, note that
\[
Mor_{D\mathcal{A}}(\mathcal{Y}(L_1), \mathcal{Y}(L_2))
\cong \Homol(Mor_{mod_{\mathcal{A}}}(\mathcal{Y}(L_1), \mathcal{Y}(L_2))).
\]
 For any object $K$, $\mathcal{Y}(\alpha)$ determines the map
 \[
 \mathcal{Y}(L_1)(K) \cong Mor(K,L_1) \xrightarrow{\mu^2(\alpha,-)}  
 Mor(K,L_2) \cong \mathcal{Y}(L_2)
 \]
 The existence and uniqueness of $\alpha$ follow immediately from $\Homol(\mathcal{Y})$ being full and faithful.
\end{proof}

\noindent
These notions are applied in this paper to the $A_{\infty}$-category $\mathcal{F}uk(M)$.

\section*{Acknowledgements}
This work is part of my doctoral studies at ETH under the supervision of Paul Biran. I would like to express my deep gratitude to Paul Biran for his guidance, many patient explanations and for sharing his insights with me. I'm grateful to Jonny Evans for our conversation about examples. I would also like to thank Alessio Pellegrini for reading this work and helping to improve the paper. 
The author was partially supported by the Swiss National Science Foundation (grant number 200021 204107).


\bibliographystyle{aomalpha}
\bibliography{main}

\providecommand{\bysame}{\leavevmode\hbox to3em{\hrulefill}\thinspace}
\providecommand{\noopsort}[1]{}
\providecommand{\mr}[1]{\href{http://www.ams.org/mathscinet-getitem?mr=#1}{MR~#1}}
\providecommand{\zbl}[1]{\href{http://www.zentralblatt-math.org/zmath/en/search/?q=an:#1}{Zbl~#1}}
\providecommand{\jfm}[1]{\href{http://www.emis.de/cgi-bin/JFM-item?#1}{JFM~#1}}
\providecommand{\arxiv}[1]{\href{http://www.arxiv.org/abs/#1}{arXiv~#1}}
\providecommand{\doi}[1]{\url{https://doi.org/#1}}
\providecommand{\MR}{\relax\ifhmode\unskip\space\fi MR }
\providecommand{\MRhref}[2]{%
  \href{http://www.ams.org/mathscinet-getitem?mr=#1}{#2}
}
\providecommand{\href}[2]{#2}
\begin{thebibliography}{WW10}

\bibitem[BC13]{BC1}
\bgroup\scshape{}P.~Biran\egroup{} and \bgroup\scshape{}O.~Cornea\egroup{},
  Lagrangian cobordism. {I},  \emph{J. Amer. Math. Soc.} \textbf{26} no.~2
  (2013), 295--340.

\bibitem[BC14]{BC2}
\bgroup\scshape{}P.~Biran\egroup{} and \bgroup\scshape{}O.~Cornea\egroup{},
  Lagrangian cobordism and {F}ukaya categories,  \emph{Geom. Funct. Anal.}
  \textbf{24} no.~6 (2014), 1731--1830.

\bibitem[BC17]{BC3}
\bgroup\scshape{}P.~Biran\egroup{} and \bgroup\scshape{}O.~Cornea\egroup{},
  Cone-decompositions of {L}agrangian cobordisms in {L}efschetz fibrations,
  \emph{Selecta Math. (N.S.)} \textbf{23} no.~4 (2017), 2635--2704.

\bibitem[DS94]{DostSal}
\bgroup\scshape{}S.~Dostoglou\egroup{} and \bgroup\scshape{}D.~A.
  Salamon\egroup{}, Self-dual instantons and holomorphic curves,  \emph{Ann. of
  Math. (2)} \textbf{139} no.~3 (1994), 581--640.

\bibitem[Flo88]{floer}
\bgroup\scshape{}A.~Floer\egroup{}, Morse theory for {L}agrangian
  intersections,  \emph{J. Differential Geom.} \textbf{28} no.~3 (1988),
  513--547.

\bibitem[Gau03]{gautschi}
\bgroup\scshape{}R.~Gautschi\egroup{}, Floer homology of algebraically finite
  mapping classes,  \emph{J. Symplectic Geom.} \textbf{1} no.~4 (2003),
  715--765.

\bibitem[Kea14]{keating}
\bgroup\scshape{}A.~M. Keating\egroup{}, Dehn twists and free subgroups of
  symplectic mapping class groups,  \emph{J. Topol.} \textbf{7} no.~2 (2014),
  436--474.

\bibitem[LZ18]{leclercq-zapolsky}
\bgroup\scshape{}R.~Leclercq\egroup{} and
  \bgroup\scshape{}F.~Zapolsky\egroup{}, Spectral invariants for monotone
  {L}agrangians,  \emph{J. Topol. Anal.} \textbf{10} no.~3 (2018), 627--700.

\bibitem[Lee05]{lee}
\bgroup\scshape{}Y.-J. Lee\egroup{}, Reidemeister torsion in {F}loer-{N}ovikov
  theory and counting pseudo-holomorphic tori. {I},  \emph{J. Symplectic Geom.}
  \textbf{3} no.~2 (2005), 221--311.

\bibitem[MW18]{MakWu}
\bgroup\scshape{}C.~Y. Mak\egroup{} and \bgroup\scshape{}W.~Wu\egroup{}, Dehn
  twist exact sequences through {L}agrangian cobordism,  \emph{Compos. Math.}
  \textbf{154} no.~12 (2018), 2485--2533.

\bibitem[Oh93]{oh1}
\bgroup\scshape{}Y.-G. Oh\egroup{}, Floer cohomology of {L}agrangian
  intersections and pseudo-holomorphic disks. {I},  \emph{Comm. Pure Appl.
  Math.} \textbf{46} no.~7 (1993), 949--993.

\bibitem[Oh95]{oh_add}
\bgroup\scshape{}Y.-G. Oh\egroup{}, Addendum to: ``{F}loer cohomology of
  {L}agrangian intersections and pseudo-holomorphic disks. {I}.'' [{C}omm.
  {P}ure {A}ppl. {M}ath. {\bf 46} (1993), no. 7, 949--993],  \emph{Comm. Pure
  Appl. Math.} \textbf{48} no.~11 (1995), 1299--1302.

\bibitem[Ped22]{pedrotti}
\bgroup\scshape{}R.~Pedrotti\egroup{}, Fixed point floer cohomology of disjoint
  dehn twists on a w+-monotone manifold with rational symplectic form,
  \emph{arXiv preprint arXiv:2203.05979} (2022).

\bibitem[RS22]{robbin_salamon}
\bgroup\scshape{}J.~W. Robbin\egroup{} and \bgroup\scshape{}D.~A.
  Salamon\egroup{}, Introduction to differential geometry,  \emph{Springer
  Spektrum, Wiesbaden} (2022).

\bibitem[Sal10]{Salepci}
\bgroup\scshape{}N.~Salepci\egroup{}, Real elements in the mapping class group
  of {$T^2$},  \emph{Topology Appl.} \textbf{157} no.~16 (2010), 2580--2590.

\bibitem[Sal12]{salepci2}
\bgroup\scshape{}N.~Salepci\egroup{}, Invariants of totally real {L}efschetz
  fibrations,  \emph{Pacific J. Math.} \textbf{256} no.~2 (2012), 407--434.

\bibitem[Sei97a]{seidelthesis97}
\bgroup\scshape{}P.~Seidel\egroup{}, Floer homology and the symplectic isotopy
  problem,  \emph{PhD thesis, University of Oxford} (1997).

\bibitem[Sei97b]{seidel97}
\bgroup\scshape{}P.~Seidel\egroup{}, {$\pi_1$} of symplectic automorphism
  groups and invertibles in quantum homology rings,  \emph{Geom. Funct. Anal.}
  \textbf{7} no.~6 (1997), 1046--1095.

\bibitem[Sei38]{seidel_lectures}
\bgroup\scshape{}P.~Seidel\egroup{}, Lectures on four-dimensional {D}ehn
  twists,  in \emph{Symplectic 4-manifolds and algebraic surfaces},
  \emph{Lecture Notes in Math.}, 1938, pp.~231--267.

\bibitem[Sei96]{seidel96}
\bgroup\scshape{}P.~Seidel\egroup{}, The symplectic {F}loer homology of a
  {D}ehn twist,  \emph{Math. Res. Lett.} \textbf{3} no.~6 (1996), 829--834.

\bibitem[Sei00]{seidel00}
\bgroup\scshape{}P.~Seidel\egroup{}, Graded {L}agrangian submanifolds,
  \emph{Bull. Soc. Math. France} \textbf{128} no.~1 (2000), 103--149.

\bibitem[Sei03]{seidel03}
\bgroup\scshape{}P.~Seidel\egroup{}, A long exact sequence for symplectic
  {F}loer cohomology,  \emph{Topology} \textbf{42} no.~5 (2003), 1003--1063.

\bibitem[Sei08]{seidelbook}
\bgroup\scshape{}P.~Seidel\egroup{}, Fukaya categories and {P}icard-{L}efschetz
  theory,  \emph{Zurich Lectures in Advanced Mathematics, European Mathematical
  Society (EMS)} (2008).

\bibitem[WW10]{WW}
\bgroup\scshape{}K.~Wehrheim\egroup{} and \bgroup\scshape{}C.~T.
  Woodward\egroup{}, Quilted {F}loer cohomology,  \emph{Geom. Topol.}
  \textbf{14} no.~2 (2010), 833--902.

\end{thebibliography}

\end{document}